\documentclass[10pt]{amsart}

\usepackage{latexsym}
\usepackage{amscd}
\usepackage{amssymb}
\usepackage[all]{xy}
\usepackage{amsmath}
\newtheorem{theorem}{Theorem}[section]
\newtheorem{proposition}[theorem]{Proposition}

\newtheorem{corollary}[theorem]{Corollary}
\newtheorem{remark}[theorem]{Remark}
\newtheorem{example}[theorem]{Example}
\newtheorem{conjecture}[theorem]{Conjecture}

\def\Soc{\mbox{Soc\/}}

\def\Im{\mbox{Im\/}}
\def\End{\mbox{End\/}}

\def\H{\mbox{Hom\/}}

\begin{document}

\title[pseudo-Frobenius rings and the FGF conjecture]{New Characterizations of pseudo-Frobenius rings and a generalization of the FGF conjecture}
\subjclass[2010]{16D40, 16D80, 16L60.}
\keywords{PF rings, QF rings, tight modules}
\author{Pedro A. Guil Asensio}
\thanks{{The first author has been partially supported by the DGI (MTM2010-20940-C02-02)
and by the Excellence Research Groups Program of the S\'eneca Foundation of the Region of Murcia. Part of
the sources of both institutions come from the FEDER funds of the European Union.}}
\address{Departamento de Mathematicas, Universidad de Murcia, Murcia, 30100, Spain}
\email{paguil@um.es}
\author{Serap Sahinkaya}
\address{Gebze Institute of Technology, Department of Mathematics, Kocaeli, 41400 Turkey}
\email{ssahinkaya@gyte.edu.tr}
\author{Ashish K. Srivastava}
\address{Department of Mathematics and Computer Science, St. Louis University, St.
Louis, MO-63103, USA} \email{asrivas3@slu.edu}
\dedicatory{Dedicated to Alberto Facchini on his 60th birthday}

\begin{abstract}
We provide new characterizations of pseudo-Frobenius and quasi-Frobenius rings in terms of tight modules. In the process, we also provide fresh perspectives on FGF and CF conjectures. In particular, we propose new natural extensions of these conjectures which connect them with the classical theory of PF rings. Our techniques are mainly based on set-theoretic counting arguments initiated by Osofsky. Several corollaries and examples to illustrate their applications are given.
\end{abstract}

\maketitle



\section{Introduction.}

\noindent A ring $R$ is called right {\it pseudo-Frobenius} (PF, for short) when it is a right self-injective right cogenerator ring. And a right PF ring is called {\it quasi-Frobenius} (QF, for short) when it is, moreover, right (and left) artinian.  The origin of these rings can be drawn back to extensions of the concept of Frobenius algebras associated to the modular representations of finite groups (see e.g. \cite{CR}).

It is well known that a two-sided PF ring establishes a perfect duality in the sense
of \cite[Chapter 12, pages 307-308]{Ka} and that a left and right cogenerator ring (in particular,
a commutative cogenerator ring) is both-sided PF. The main reason why left and right cogenerator rings induce a perfect duality is that they are both-sided finitely cogenerated, that is, their left and right socles are finitely generated and essential in the ring (see e.g. \cite[Theorem 19.18]{Lam}).
One sided PF rings were introduced and studied independently by Azumaya \cite{Az}, Osofsky \cite{O} and Utumi \cite{U}. It is known that a right PF ring does not need to be left PF  \cite{DM}. But a deep theorem of Osofsky showed that right PF rings still enjoy the properties of being semiperfect and having finitely generated essential right socle \cite[Theorem~1]{O}.

On the other hand, it is easy to check that a ring in which any right module embeds in a free module is QF. This fact suggested Faith to conjecture in \cite{F1} that a ring is QF provided that any finitely generated right module embeds in a free module, thus extending an older question of Levy for commutative rings. And more generally, it is conjectured that a ring in which every cyclic right module embeds in a free module is right artinian. Rings satisfying that every cyclic (resp., finitely generated) right module embeds in a free module are usually called in the literature right CF (resp., right FGF) rings. And the question of whether any right CF (resp., right FGF) ring is right artinian (resp., QF) is nowadays known as the CF (resp., FGF) conjecture. Both conjectures are still open, whereas it is known that the CF conjecture implies the FGF conjecture and that they are true under many different additional hypothesis (see e.g. \cite{F1, EE, GG3, GT, JLP1, JST}).
Note that every right FGF ring is a right CF ring, but the converse  is not true. Bj\"{o}rk \cite{Bjork2} gave an example of a right CF ring which is not right FGF.

Probably the most promising partial positive results to the CF and FGF conjectures are based on using the set theoretical counting techniques developed by Osofsky in her proof that a right PF ring has finitely generated essential socle. This approach to the conjecture was initiated independently by  Bj\"{o}rk \cite{Bjork} and Tolskaya \cite{Tol} who proved that every right self-injective right CF ring is right artinian. And it culminated in \cite{EE}, where the authors proved that every ring in which any cyclic (resp., finitely generated) right module essentially embeds in a projective module is right artinian (resp., QF). They also proved in \cite{GG2} that a right CF and right extending ring has finitely generated essential socle. In particular, any right cogenerator right extending ring is right PF. Note that a ring is called right extending (or right CS) if every right ideal is essential in a direct summand of the ring.

All the above results suggest that there might exist a deep relation between the characterization obtained by Osofsky of right PF rings and the CF and FGF conjectures. But surprisingly, it seems that there has not been any attempt in the literature of connecting both situations. The main purpose of the present paper is to highlight these connections, which allows us to obtain new non-trivial characterizations of right PF rings, as well as new partial positive answers to the CF and FGF conjectures.

Our approach is based on the notion of tight rings. Tight rings and modules were introduced by Golan and L\'opez-Permouth in \cite{GLP} in order to study QI-filters and they have been later studied in \cite{JL, JLPS} in connection with weakly-injective modules. Recall that  a ring $R$ is called right tight (resp., right $R$-tight) if every finitely generated (resp., every cyclic) submodule of its injective envelope $E(R_R)$ embeds in $R$. The definition of tightness is closely related to the notion of embedding of finitely generated or cyclic modules in free modules. Therefore, it seems natural to conjecture that they might play a role in the characterization of right PF rings, as well as in answering the CF and FGF conjectures. Moreover, any right PF ring is trivially right tight and thus, they are the natural candidate to establish a link between both notions. And as a byproduct, one may adapt, exploit and extend different deep techniques, which have been developed in order to solve these conjectures, to get nontrivial new characterizations of PF and QF rings.

We begin by extending in Theorem~\ref{one} the techniques developed in \cite{EE}. This allows us to obtain as corollaries the main results of \cite{EE, GG2}. Next, we study in Theorem~\ref{nil} and Theorem~\ref{two} when a right tight cogenerator ring has a finitely generated essential right socle. Both results are inspired by the above mentioned transfinite counting arguments introduced by Osofsky in \cite{O} which, in turn, were based on an old result of Tarski on {\em almost disjoint} partitions of infinite sets \cite{Ta}. The obtained results allow us to establish the following conjecture:

\smallskip

\noindent{\bf Conjecture~1.} Every right cogenerator right $R$-tight ring is right PF.

\smallskip

\noindent We finish this section by proving this conjecture under different additional conditions and exhibiting several corollaries and examples which illustrate the applications and limits of the developed theory.

We begin Section 3 by observing that the obtained results naturally lead to establish the following new conjecture that encompasses the different open questions and conjectures existing on the topic:

\smallskip

\noindent{\bf Conjecture~2.} Every right Kasch generalized right ($R$-)tight ring has finitely generated and essential right socle.

\smallskip

\noindent Recall that a ring $R$ is called right Kasch when it cogenerates all simple right modules. In particular, any right cogenerator ring is right Kasch. And $R$ is called generalized right ($R$-)tight if every finitely generated (resp., cyclic) submodule of $E(R_R)$ embeds in a free module. It may be noted here that a positive solution to Conjecture 2 would imply affirmative answers to both Conjecture ~1 and the CF and FGF conjectures. Note also that Osofsky's characterization of right PF rings can be seen as a particular solution to this conjecture when the ring is assumed to be right self-injective.

We dedicate the rest of the paper to show that our new conjecture is satisfied when we assume the different additional conditions under which the CF and FGF conjectures are known to be true. This shows that this conjecture naturally extends the CF and FGF conjectures, and it connects them to Osofsky's work on PF rings. Moreover, as a byproduct of these results, we obtain new partial positive answers to the CF and FGF conjectures.

Throughout this paper, all rings $R$ will be associative and with identity,
and Mod-$R$ will denote the category of right $R$-modules. We will use the notation $M_R$ to stress the right $R$-module structure of a module $M$, when necessary. We will denote by $J(R)$, the Jacobson radical of a ring $R$ and by $Z(R_R)$, the singular right ideal of $R$ consisting of those elements of $R$ which have essential right annihilator.
We refer to \cite{AF, JST, Lam, NY, St} for all undefined notions used in the text.

\bigskip

\section{New characterizations of PF rings.}


\noindent We begin by proving several extensions of \cite[Theorem~1]{O} which will be used in our characterization of right PF rings. As a consequence, we will also deduce the main results of \cite{EE,GG2}. Recall that a ring $R$ is called right Kasch if every simple right module embeds in $R$.


\begin{theorem}\label{one}
Let $R$ be a right Kasch ring such that each cyclic submodule of the injective envelope $E(R_R)$ embeds in a free module. Assume that every direct summand of $E(R_R)$ contains an essential projective module $P$ such that $P/(P\cdot Z(R_R))$ is finitely generated. Then $R_R$ has a finitely generated essential socle.
\end{theorem}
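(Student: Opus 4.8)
My plan is to run a transfinite counting argument of Osofsky type \cite{O}, extending the method of \cite{EE}. Throughout put $E=E(R_R)$ and $Z=Z(R_R)$, and recall that for any projective module $P$ one has $Z(P)=PZ$ (realise $P$ as a direct summand of a free module and use that $Z$ is a two-sided ideal), so the hypothesis says exactly that $P/Z(P)$ is finitely generated for the relevant $P$. I would organise the argument around the following reduction: if the conclusion fails, then $E$ has a direct summand $E'$ which is either nonzero with zero socle, or has essential but not finitely generated socle; in either case the hypothesis supplies an essential projective submodule $P\subseteq E'$ with $P/PZ$ finitely generated, and one checks $\Soc(P)=\Soc(E')$ since $P$ is essential in $E'$.

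The core step would be to manufacture, inside such an $E'$, a cyclic submodule that does not embed in any free module, contradicting the hypothesis. First I would fix a countably infinite family of nonzero cyclic submodules of $P$ that is ``independent enough'' for the count: an independent family of simple submodules when $\Soc(E')$ is infinitely generated, and (since $\Soc(P)=0$ in the other case) a suitable strictly descending chain inside $P$ together with the injectivity of $E$ to play the same role. By Tarski's theorem \cite{Ta} I would pick an almost disjoint family $\{X_\alpha\}_{\alpha<\lambda}$ of infinite subsets of $\mathbb{N}$ with $\lambda>|R|$. Using injectivity of $E$, for each $\alpha$ I would build an element $x_\alpha\in E'$ whose ``support'' relative to the chosen family is exactly $X_\alpha$, together with an embedding $\iota_\alpha\colon x_\alpha R\hookrightarrow F_\alpha$ into a free module $F_\alpha$. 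Since $R$ is right Kasch, each simple module occurring embeds into $R$, so from the finitely many coordinates of $F_\alpha$ on which $\iota_\alpha(x_\alpha)$ is nonzero one can extract a \emph{finite} subset $T_\alpha\subseteq X_\alpha$ recording $x_\alpha$ modulo the singular submodule $PZ$; here the finiteness of $P/PZ$ is precisely what forces the genuinely free part of $E'$ to involve only boundedly many coordinates, so that $|T_\alpha|$ is bounded. A pigeonhole count over the $\lambda$ indices, against the at most $|R|<\lambda$ essentially different packages of coordinate data, then forces $\alpha\ne\beta$ to agree on an infinite set of coordinates, contradicting $|X_\alpha\cap X_\beta|<\aleph_0$. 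Hence no such $E'$ exists, and $\Soc(R_R)$ is finitely generated and essential; the main results of \cite{EE} and \cite{GG2} would then follow by specialising the hypotheses.

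The step I expect to be the main obstacle is the last one: pinning down how ``$P/PZ$ finitely generated'' bounds the coordinate data so that an almost disjoint family of size $|R|^{+}$ really yields a collision. The singular submodule $PZ$ contributes simple subfactors that freeness cannot control and which must be routed through the Kasch hypothesis instead; cleanly separating the free and singular contributions, and verifying that the ``zero socle'' case supplies an adequate substitute for an infinite independent family of simples, is where the argument is more delicate than the one in \cite{EE}.
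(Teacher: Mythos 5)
Your proposal follows a quite different route from the paper's, and it contains substantive gaps. The paper's proof of this theorem (following the technique of \cite{EE}) does \emph{not} invoke Tarski's almost-disjoint families at all; the transfinite counting argument you describe is the mechanism of Theorem~\ref{nil}, not this one. Here the argument is carried out in $S=\End_R(E)$: given an idempotent-orthogonal family $\{C_k\}_{k\in K}$ of simple right $S/J$-modules, one constructs projections $p_k:S\to C_k$ with $p_k\circ e_{k^*}=p_k$, passes to $p_k\otimes_S E:E\to C_k\otimes_S E$, and shows that the image $L_k$ of the essential projective $P_k\subseteq e_kE$ under this map is nonzero (using \cite[Proposition~1.3]{CPI} together with the fact that cyclic submodules of $E$ embed in free modules). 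The decisive use of the hypothesis ``$P_k/(P_k\cdot Z(R_R))$ finitely generated'' is then that $C_k$ is an $S/J$-module and $\ker(R\to S/J)=Z(R_R)$, so $L_k$ is an $R/Z(R_R)$-module and the map factors through $P_k\otimes_R (R/Z(R_R))\cong P_k/(P_k Z(R_R))$; hence $L_k$ is finitely generated and has a simple quotient $U_k$. Injectivity of $k\mapsto[U_k]$ then feeds into the Kasch hypothesis to bound the number of nonisomorphic minimal right ideals of $S/J$, forcing $S/J$ to be semisimple artinian, $S$ semiperfect, and $E$ finite-dimensional. Your proposal does not reproduce any of this mechanism.

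Beyond being a different strategy, several of your steps do not hold up. You take an almost-disjoint family of subsets of $\mathbb{N}$ of size $\lambda>|R|$: this requires $|R|<2^{\aleph_0}$, which is not assumed; the Osofsky/Tarski argument needs a basis family whose cardinality is tied to the number of homogeneous components (or to $|R|$), not to $\aleph_0$. More seriously, the assertion that finiteness of $P/PZ(R_R)$ ``forces the genuinely free part of $E'$ to involve only boundedly many coordinates, so that $|T_\alpha|$ is bounded'' is unsupported: the embeddings $\iota_\alpha:x_\alpha R\hookrightarrow F_\alpha$ land in possibly different free modules, with no common coordinate system, and nothing bounds the number of relevant coordinates uniformly in $\alpha$. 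The way the finitely generated condition actually enters is purely through the factorization $P_k\to P_k/(P_kZ(R_R))\to L_k$, not through any bound on supports in a free cover. Finally, the ``zero socle'' case is hand-waved (a descending chain plus injectivity of $E$ does not obviously provide the independent family needed), and the final pigeonhole against ``at most $|R|$ packages of coordinate data'' is not pinned down. In short, the approach would need to be restructured around the endomorphism ring of $E$ and the tensor $-\otimes_S E$ to make contact with the actual role of the hypotheses.
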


\begin{proof}
Let $E=E(R_R)$. As in \cite[Lemma 2.4]{EE}, we first show that if $S = \End(E_R)$
and $\{C_k\}_{k\in K}$ is an idempotent-orthogonal family of simple right
$S/J$-modules (with $J = J(S)$), then there exists an injective mapping
from index set $K$ to the set $\Omega(R)$ of isomorphism classes of simple right $R$-modules.

Since idempotents of $S/J$ lift modulo $J$, there exist idempotents $\{e_k\}_{k\in K}$ of $S$ such that $C_ke_k\neq 0$ for any $k\in K$ and either $C_je_k=0$ or $C_ke_j=0$ for $k\neq j$. Let $c_k\in C_k$ be such that $c_ke_k\neq 0$ for each $k\in K$, and let $p_k: S_S\rightarrow C_k$ be the homomorphism defined as $p_k(1)=c_ke_k$. If $e_{k^{*}}=\H_R(E, e_k)$ is the endomorphism of $S_S$ given by left multiplication with $e_k$, we have $(p_k \circ e_{k^{*}})(1)=c_ke_k^2=c_ke_k=p_k(1)$, and so $p_k \circ e_{k^{*}}=p_k$. Thus it follows that
\[(p_k \otimes_S E) \circ e_k=(p_k \otimes_S E) \circ (e_{k^{*}}\otimes_S E)=(p_k \circ e_{k^{*}}) \otimes_S E=p_k\otimes_S E.\]

\noindent Set $E_k = \Im(e_k)$. We have by hypothesis that $E_k = E(P_k)$ with $P_k$, a projective module such that $P_k/(P_k\cdot Z(R_R))$ is finitely generated. By hypothesis, each finitely generated submodule
of $E_k$ embeds in a free module. Then $(p_k\otimes_S E)(P_k)\neq 0$ by \cite[Proposition~1.3]{CPI}.  Let $h_k: P_k\rightarrow E_k$, $i_k: E_k\rightarrow E$,
and $t_k=i_k \circ h_k: P_k\rightarrow E$ be the inclusions, and set $L_k:=Im ((p_k\otimes_S E)\circ t_k)$, with canonical projection $q_k: P_k\rightarrow L_k$ and
inclusion $w_k:L_k\rightarrow C_k\otimes_S E$. Note that $C_k$ is a right $S/J$-module and thus, $L_k$ is a right $R/Z(R_R)$-module. Therefore, $(p_k\otimes_S E)\circ t_k$ factors through $P_k/(P_k\cdot Z(R_R))\cong P_k\otimes_R(R/Z(R_R))$ and so, $L_k$ is finitely generated. This means that  we can choose for each $k\in K$, a simple quotient $U_k$ of $L_k$ with canonical projection $\pi_k: L_k\rightarrow U_k$. We define a map from index set $K$ to the set $\Omega(R)$ by assigning $k\mapsto [U_k]$, where $[U_k]$ denotes the isomorphism class of the simple module $U_k$. It may be checked that this map is injective.

Now, since $R_R$ cogenerates the simple modules by hypothesis, we have, as shown in \cite{EE}, that $\left| \Omega(R) \right| \leq \left| C(R)\right|$ where $C(R)$ denotes a set of representatives of the isomorphism classes of simple submodules of $R$. Let $\mathcal M$ represent the set of isomorphism classes of minimal right ideals of $S/J$ and assume $|\Omega (R)| =n$. We claim that $|{\mathcal M}| = n$. Let $C_1, \ldots,
C_r$ be a set of representatives of the elements of $\mathcal M$. Suppose that there exists a simple right $S$-module $C=C_{r+1}$ which is not isomorphic to any of the $C_i,$ for $1\le i\le r$. There exist idempotent elements
$e_1, e_2, \ldots, e_r \in S$ such that, if $\bar e_i = e_i+J$, then $C_i = \bar e_i(S/J)$ for each $1\le i\le r$. Since $\bar e_i(S/J)
\bar e_j = \H_{S/J}(\bar e_j(S/J),\bar e_i(S/J))$, we have $\bar e_i(S/J)\bar e_j = 0$ for $i, j\le r$, $i\neq j$ and $\bar e_i(S/J)\bar e_i \neq 0$ for all $i = 1, \ldots, r$. Thus the family $\{C_i\}$, $i=1,\ldots,r+1$
is an idempotent-orthogonal family of simple right $S/J$-modules with
respect to the idempotents $\{\bar e_1, \ldots, \bar e_r, 1\}$. We have then $r+1 \leq n$, a contradiction that shows that the simple module $C$ cannot exist, and hence that $S/J$ is a semisimple artinian ring. Therefore $S$ is
a semiperfect ring and $E_R$ is a finite-dimensional module. Thus
$E_R$ is a finite direct sum of indecomposable submodules. From the preceding argument it also follows that $r\le n$ and hence that $r=n$. Since there exists a bijection between the set of
isomorphism classes of indecomposable direct summands of $E_R$ and the
set $\mathcal M$ of isomorphism classes of minimal right ideals of $S/J$, the number of isomorphism classes of indecomposable direct summands of $E_R$ is exactly $n$, and so each of them is an injective envelope of a simple right $R$-module, so that
$E_R$, and hence $R_R$ has finite essential socle.
\end{proof}


\noindent As a consequence, we have the following.

\begin{corollary}\cite[Corollary~3.3, Corollary~3.5]{EE}\label{EE} Let $R$ be a ring. If every cyclic right $R$-module essentially embeds in a projective module, then $R$ is right artinian.

If moreover, every finitely generated right $R$-module essentially embeds in a free module, then $R$ is QF.
\end{corollary}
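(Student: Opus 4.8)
The plan is to derive both assertions from Theorem~\ref{one}, by verifying its three hypotheses and then importing from \cite{EE} the passage from ``finitely generated essential socle'' to right artinian, and thence to QF. Throughout write $E=E(R_R)$ and $Z=Z(R_R)$.

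Two of the hypotheses are immediate. First, $R$ is right Kasch: a simple right module $S$ essentially embeds --- in particular embeds --- in a projective module, which in turn embeds in a free module; as $S$ is cyclic its image lies in a finite free submodule $R^{n}$, and since $S$ is simple at least one of the $n$ coordinate projections restricts to a monomorphism on $S$, so $S\hookrightarrow R$. Second, every cyclic submodule of $E$ is a cyclic right module and hence, by hypothesis, embeds in a projective and so in a free module. Thus the point that needs work is the condition on direct summands of $E$; verifying it --- together with the subsequent implication ``finitely generated essential socle $\Rightarrow$ right artinian'', which is the technical heart of \cite{EE} --- is the part I expect to be the main obstacle.

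Let $E_k$ be a direct summand of $E$, say $E=E_k\oplus E_k'$, and let $\pi_k\colon E\to E_k$ be the projection. Then $\pi_k(R)=\pi_k(1)R$ is cyclic, and it is essential in $E_k$, since the image of an essential submodule under the projection along a direct decomposition is again essential. By hypothesis $\pi_k(R)$ essentially embeds in a projective module $P$; as $E_k$ is injective (being a summand of the injective $E$) and $\pi_k(R)$ is essential both in $E_k$ and in $P$, we may identify $E(P)=E_k$ and regard $P$ as an essential projective submodule of $E_k$. It remains to see that $\overline P:=P/PZ$ is finitely generated. Being a base change of the projective $R$-module $P$, the module $\overline P$ is projective over $R/Z$; fix $\overline P\oplus Q=(R/Z)^{(X)}$. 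The image $\overline C$ of $\pi_k(R)$ in $\overline P$ is cyclic and hence lies in a finite subsum $(R/Z)^{(X_0)}$; moreover $\overline P/\overline C$ is a homomorphic image of $P/\pi_k(R)$, which is singular because $\pi_k(R)$ is essential in $P$. Projecting $(R/Z)^{(X)}=(R/Z)^{(X_0)}\oplus(R/Z)^{(X\setminus X_0)}$ onto the second summand annihilates $\overline C$, and so carries $\overline P$ onto a quotient of $\overline P/\overline C$ --- hence onto a singular module; but $(R/Z)^{(X\setminus X_0)}$ is a nonsingular right $R$-module, since $R/Z(R_R)$ is (its right singular ideal being closed), so this image is $0$. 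Therefore $\overline P\subseteq(R/Z)^{(X_0)}$, and the modular law gives $(R/Z)^{(X_0)}=\overline P\oplus\bigl((R/Z)^{(X_0)}\cap Q\bigr)$; thus $\overline P$ is a direct summand of the finitely generated free $R/Z$-module $(R/Z)^{(X_0)}$, hence finitely generated. By Theorem~\ref{one}, $R_R$ now has finitely generated essential socle.

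Finally, one upgrades this to right artinianness and, under the stronger hypothesis, to QF. The property ``finitely generated essential socle'' descends from $R_R$ to every cyclic right module --- a cyclic module $C$ is essential in its injective envelope $E(C)$, which is a direct summand of a finite power of $E$ and therefore also has finitely generated essential socle, whence so does $C$ --- and hence to every finitely generated right module; in particular $R$ is right semiartinian. Combining this with the semiperfectness of $\End(E)$ and the finite-dimensionality of $E$ established in the proof of Theorem~\ref{one}, one concludes, exactly as in \cite{EE}, that $R_R$ has finite Loewy length, i.e.\ $R$ is right artinian. If, in addition, every finitely generated right module essentially embeds in a free module, then $R$ is right FGF; as $R$ is now right artinian --- hence semiperfect and left perfect --- the FGF conjecture is known to hold for $R$ (cf.\ \cite{EE, F1}), so $R$ is QF. Alternatively, $E(R_R)$ is finitely generated over the artinian ring $R$ and, being injective and essentially embedded in a free module, is a projective direct summand of it, which with right artinianness forces $R$ to be right self-injective.
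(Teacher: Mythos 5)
Your overall plan is the paper's own: verify the three hypotheses of Theorem~\ref{one}, with the substantive work being the finiteness of $P/PZ$, then cite \cite{EE} for the upgrade to artinian/QF. The first two verifications and the construction of the essential projective $P\subseteq E_k$ are fine (and you correctly point out that the Kasch condition needs a word, which the paper leaves implicit). But there is a genuine error in the step where you show $\overline P=P/PZ$ is finitely generated.

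You project $\overline P\subseteq(R/Z)^{(X)}$ onto $(R/Z)^{(X\setminus X_0)}$ and argue the image, being a quotient of the singular module $P/\pi_k(R)$, must vanish because ``$(R/Z)^{(X\setminus X_0)}$ is a nonsingular right $R$-module, since $R/Z(R_R)$ is (its right singular ideal being closed).'' That parenthetical is false: $Z(R_R)$ is not a closed right ideal in general, and $R/Z(R_R)$ need not be nonsingular as a right $R$-module --- its singular submodule is $Z_2(R_R)/Z(R_R)$, which can be nonzero. A concrete counterexample satisfying the very hypothesis of this corollary is $R=\mathbf{Z}/4\mathbf{Z}$: this ring is QF, so every module essentially embeds in a projective, yet $Z(R_R)=2R$ and $R/Z\cong\mathbf{Z}/2\mathbf{Z}$ is singular over $R$, so $Z_2(R_R)=R\neq Z(R_R)$. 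Hence the nonsingularity you invoke simply fails, and your proof that the image is $0$ has a gap.

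The conclusion you want is nevertheless correct, and the fix is to carry out the computation over $R$ before reducing modulo $Z$, which is in effect what the paper does. Write $P\oplus Q=R^{(I)}$ and pick a finite $I'\subseteq I$ with $xR\subseteq R^{(I')}$, where $xR\subseteq P$ is the essential cyclic. Let $\rho\colon R^{(I)}\to R^{(I\setminus I')}$ be the projection. Since $\rho$ kills $xR$ and $xR$ is essential in $P$, the map $\rho|_P$ has essential kernel, so $\rho(P)$ is a singular submodule of the free module $R^{(I\setminus I')}$; but $Z\bigl(R^{(I\setminus I')}\bigr)=Z(R_R)^{(I\setminus I')}$, so $\rho(P)\subseteq Z(R_R)^{(I\setminus I')}$. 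This dies under $R^{(I\setminus I')}\twoheadrightarrow(R/Z)^{(I\setminus I')}$, so the composite $P\to(R/Z)^{(I\setminus I')}$ is zero, i.e.\ $\overline P\subseteq(R/Z)^{(I')}$, and then modularity makes $\overline P$ a direct summand of $(R/Z)^{(I')}$, hence finitely generated. The point is that the singular image of $\rho|_P$ sits inside $Z^{(I\setminus I')}$ and so vanishes mod $Z$ for trivial reasons --- no nonsingularity of $R/Z$ is needed or available. (Equivalently, the paper phrases this as: $p\circ w-p\circ i\circ\pi\circ w$ kills the essential $xR$, so lands in $Z(P)=P\cdot Z(R_R)$, whence $\overline{p\circ w}=\overline{p\circ i\circ\pi\circ w}$ factors through $R^{(I')}$.) Your final upgrade to right artinian and QF is delegated to \cite{EE} exactly as in the paper, though the cleanest way to get finitely generated essential socle of a cyclic $C$ is via $C\hookrightarrow R^{n}$ and $\Soc(C)=C\cap\Soc(R)^{n}$, rather than through $E(P)$ for a possibly infinitely generated $P$.
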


\begin{proof}
Let us first show that $\Soc(R_R)$ is finitely generated and essential. In order to apply Theorem~\ref{one}, we only need to show that any direct summand of $E=E(R_R)$ contains an essential projective submodule $P$ such that $P/(P\cdot Z(R_R))$ is a finitely generated right module.

Let $E'$ be a nonzero direct summand of $E$. As $R_R$ is essential in $E$, $E'$ contains an essential cyclic module $xR$. By hypothesis, there exists an essential monomorphism $u:xR\rightarrow P$, for some projective module $P$. And this essential monomorphism extends by injectivity to a monomorphism $v:P\rightarrow E'$. Therefore, $E'$ contains the essential projective submodule $P$.

Let us now check that $P/(P\cdot Z(R_R))$ is finitely generated.
As $P$ is projective, it is a direct summand of a free module, say $R^{(I)}$.
Let $w:P\rightarrow R^{(I)}$ and $p:R^{(I)}\rightarrow P$ be the canonical injection and projection. Now, as $xR$ is cyclic, there exists a finite subset $I'$ of $I$ such that $w\circ u|_{xR}\subseteq R^{(I')}$. Let $\pi:R^{(I)}\rightarrow R^{(I')}$ and $i:R^{(I')}\rightarrow R^{(I)}$ be the projection and injection, respectively. Then $p\circ w|_{xR}-p\circ i\circ \pi\circ w|_{xR}=0$ and therefore, as $xR$ is essential in $E'$, this means that $\Im(p\circ w-p\circ i\circ \pi\circ w)\in P\cdot Z(R_R)$. Therefore, as $p\circ w=1_P$, we deduce that $$P/(P\cdot Z(R_R))=(\Im(\pi\circ i\circ\pi\circ w)+P\cdot Z(R_R))/(P\cdot Z(R_R))$$ and therefore, it is finitely generated as it is a homomorphic image of $R^{(I')}$. The proof now follows from the arguments used in \cite[Corollary~3.3, Corollary~3.5]{EE}.
\end{proof}


\noindent Recall that a ring $R$ is called right extending (or right CS) if every right ideal essentially embeds in a direct summand of $R$.


\begin{corollary}\cite[Corollary~2.7]{GG2} Let $R$ be a right Kasch ring. If $R_R$ is extending, then it has finitely generated essential socle.
\end{corollary}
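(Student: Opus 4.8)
The plan is to deduce this from Theorem~\ref{one}, exactly as in the previous corollary, by verifying its hypotheses. Since $R$ is right Kasch by assumption, and we want to conclude that $R_R$ has finitely generated essential socle, it suffices to check the two remaining conditions of Theorem~\ref{one}: first, that every cyclic submodule of $E(R_R)$ embeds in a free module, and second, that every direct summand of $E(R_R)$ contains an essential projective submodule $P$ with $P/(P\cdot Z(R_R))$ finitely generated.

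The key observation is that when $R_R$ is extending, the summands of $E=E(R_R)$ are controlled by summands of $R_R$ itself. First I would show that every cyclic (indeed every) submodule of $E$ embeds in $R$: given a cyclic $xR\subseteq E$, its trace in $R$, or rather $xR\cap R$, is a right ideal essential in $xR$ (since $R_R$ is essential in $E$); since $R$ is extending, $xR\cap R$ is essential in a direct summand $eR$ of $R_R$, and then $eR$ is injective relative to... hmm, more carefully: $eR$ being a summand of $R$ need not be injective. Instead I would argue that $E(xR)$ is a summand of $E$ and also equals $E(eR)$; but $eR$ extending summand gives $E(eR)=E(xR)$, which does not immediately embed $xR$ into a free module. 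The cleaner route: $xR\cap R\hookrightarrow xR$ is essential, so $E(xR)=E(xR\cap R)=E(eR)$ for the summand $eR$ of $R_R$ containing $xR\cap R$ essentially. Now $xR\subseteq E(eR)$, and since $eR$ is a summand of $R_R$, it is in particular projective; moreover $xR$ embeds in $E(eR)$ which is a summand of $E$. The point is that $eR$ itself is the required essential projective submodule of the relevant summand of $E$: $eR$ is finitely generated, hence $eR/(eR\cdot Z(R_R))$ is trivially finitely generated, so the second hypothesis of Theorem~\ref{one} holds with $P=eR$.

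For the first hypothesis (cyclic submodules embed in a free module), I would observe that since $eR$ is a cyclic projective module essential in $E(xR)\supseteq xR$, and $eR$ embeds in the free module $R_R$, it suffices to embed $xR$ into $eR$. But $xR$ need not lie inside $eR$. Here is where the extending hypothesis must be used more forcefully: decompose $E=E(eR)\oplus E''$ compatibly; since $eR$ is a summand of $R_R$, write $R_R=eR\oplus fR$, and note $E(eR)$ is then the injective envelope of a summand of $R$. The module $xR$ sits in $E(eR)$. Rather than embedding $xR$ in $eR$ directly, I would instead directly verify the counting hypothesis of Theorem~\ref{one} using that each direct summand $E'$ of $E$ contains $eR$ as an essential projective submodule with $eR$ finitely generated — which is what Theorem~\ref{one} actually needs (it uses the projective submodule $P_k$ of each indecomposable summand $E_k$ and needs its finitely generated submodules to embed in free modules). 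Since $eR\hookrightarrow R_R$, every finitely generated submodule of $eR$ embeds in the free module $R_R$. So I would apply a variant: really the hypothesis ``each cyclic submodule of $E$ embeds in a free module'' in Theorem~\ref{one} is used only to guarantee $(p_k\otimes_S E)(P_k)\neq 0$ via the projective $P_k$; and for the projective submodule $P=eR$ we directly have that its finitely generated submodules embed in $R$.

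The main obstacle I anticipate is precisely this bookkeeping: showing that the hypothesis of Theorem~\ref{one} on cyclic submodules of $E$ embedding in free modules is genuinely available, or else noting that the proof of Theorem~\ref{one} only ever uses this for submodules of the projective modules $P_k$ sitting inside summands of $E$ — and for those it is automatic once $P_k$ is a summand of $R_R$. So the honest statement of the argument is: for a right extending ring, every direct summand of $E$ contains an essential submodule of the form $eR$ with $e^2=e\in R$, which is cyclic projective, finitely generated, embeds in $R_R$, and has $eR/(eR\cdot Z(R_R))$ finitely generated; feeding this into (the proof of) Theorem~\ref{one} yields that $R_R$ has finitely generated essential socle. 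I would close by remarking this recovers \cite[Corollary~2.7]{GG2}, and in particular that a right cogenerator right extending ring is right PF.
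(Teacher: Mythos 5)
Your approach is the same as the paper's: the entire proof in the paper is the single sentence ``If $R_R$ is extending, then clearly any direct summand of $E(R_R)$ contains an essential direct summand of $R$. So the result follows from Theorem~\ref{one}.'' So you have correctly identified the key step, namely that the extending condition produces, in each summand of $E(R_R)$, an essential $P=eR$ with $e^2=e\in R$, which is finitely generated projective with $P/(P\cdot Z(R_R))$ trivially finitely generated.

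The interesting part of your write-up is that you noticed the paper's one-liner verifies only the \emph{second} hypothesis of Theorem~\ref{one} and says nothing about the first (that each cyclic submodule of $E(R_R)$ embeds in a free module), which is indeed not an automatic consequence of $R_R$ being extending and Kasch. Your resolution — that inside the proof of Theorem~\ref{one} this hypothesis is invoked only once, to conclude $(p_k\otimes_S E)(P_k)\neq 0$ via \cite[Proposition~1.3]{CPI}, and that when $P_k=e_kR$ is a direct summand of $R_R$ the needed embedding-in-free property is automatic — is exactly the reading the authors must have in mind, and it is the correct way to see that the corollary goes through. So the substance is the same; you have simply been more careful than the paper about a hypothesis the paper silently drops. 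One caveat: to make this fully airtight one needs to check that \cite[Proposition~1.3]{CPI} really only requires finitely generated submodules of $P_k$ (rather than of $E_k=E(P_k)$) to embed in free modules; the paper's phrasing ``each finitely generated submodule of $E_k$ embeds in a free module'' suggests the latter, which is not what you (or the extending hypothesis) supply. But that ambiguity is present in the paper's own proof of the corollary, not introduced by you, so I regard your proposal as faithfully reconstructing the intended argument.
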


\begin{proof} If $R_R$ is extending, then clearly any direct summand of $E(R_R)$ contains an essential direct summand of $R$. So the result follows from Theorem~\ref{one}.
\end{proof}

\noindent Thus if $R$ is right extending and each cyclic right $R$-module embeds in a free module, then each cyclic right $R$-module has finitely generated essential socle and consequently the ring is right artinian.

\begin{corollary} \cite[Corollary ~2.9]{GG2} If $R$ is a right extending ring, then both CF and FGF conjectures hold for $R$.
\end{corollary}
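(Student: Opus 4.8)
The plan is to establish the two assertions separately, reducing each to material already developed in this section.

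For the CF conjecture we must show that if $R$ is right extending and right CF then $R$ is right artinian. First note that a right CF ring is automatically right Kasch: a simple right module $S$ is cyclic, hence embeds in some free module $R^{n}$, and composing such an embedding with a suitable coordinate projection $R^{n}\to R$ gives a nonzero, hence injective, map $S\to R$. Thus the preceding Corollary, \cite[Corollary~2.7]{GG2}, applies and yields that $\Soc(R_R)$ is finitely generated and essential in $R_R$. To deduce that $R$ is right artinian I would invoke the classical fact that a ring is right artinian precisely when every cyclic right module is finitely cogenerated; so it suffices to see that \emph{every} cyclic right module, and not merely $R_R$ itself, is finitely cogenerated --- which is exactly the content of the Remark preceding the statement. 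Concretely, for a right ideal $I$ one uses that $R_R$ is CS to embed $I$ essentially in $eR$ for an idempotent $e$, writes $R/I\cong eR/I\oplus(1-e)R$ with $(1-e)R$ projective and $eR/I$ a cyclic singular module which still embeds in a free module, and then runs a counting argument of Osofsky type (as in Theorem~\ref{one}) on $eR/I$ to conclude that $\Soc(R/I)$ is finitely generated and essential.

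For the FGF conjecture, assume $R$ is right extending and right FGF. Since cyclic modules are finitely generated, $R$ is in particular right CF, so by the previous step $R$ is right artinian, and it remains to upgrade this to QF. Here I would try to show that over a right extending ring every finitely generated right module that embeds in a free module in fact \emph{essentially} embeds in a projective module, and then conclude by the second part of Corollary~\ref{EE}. Given finitely generated $M\hookrightarrow R^{n}$: since $R$ is now right artinian we may write $R_R=U_1\oplus\cdots\oplus U_t$ as a finite direct sum of uniform summands, so $E(R^{n})=\bigoplus_i E(U_i)^{n}$ is a finite direct sum of indecomposable injectives; by the uniqueness of decomposition into indecomposable injectives, $E(M)$ is then a finite direct sum of copies of the $E(U_i)$, and replacing each such summand $E(U_i)$ by the projective summand $U_i$ of $R_R$ produces a projective module $P$ with $E(P)=E(M)$. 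The remaining, and most delicate, point is to deduce from this that $M$ itself essentially embeds into a projective module; alternatively, one may bypass Corollary~\ref{EE} altogether and cite the known theorem that a right perfect --- a fortiori, right artinian --- right FGF ring is QF.

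The routine ingredients are the implications ``right CF $\Rightarrow$ right Kasch'' and ``right FGF $\Rightarrow$ right CF'' together with the appeals to the already-proven corollaries. The real work lies in two places. In the CF part it is the passage from ``$\Soc(R_R)$ is finitely generated and essential'' to ``$\Soc(R/I)$ is finitely generated and essential for every right ideal $I$'', that is, re-investing the extending hypothesis and the counting machinery in order to control every cyclic module rather than just $R$. In the FGF part it is the essential-embedding upgrade: even when $R_R$ is CS, the free module $R^{n}$ need not be CS, so one cannot simply assert that $M$ is essential in a direct summand of $R^{n}$; one does obtain $E(M)=E(P)$ with $P$ projective, but not, a priori, an inclusion $M\subseteq P$, and bridging that gap --- or replacing this route by the known right perfect plus right FGF result --- is the crux.
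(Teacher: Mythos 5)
Your skeleton is right: right CF forces right Kasch (a simple module is cyclic, embeds in $R^n$, and projects nontrivially to some copy of $R$), so the preceding corollary makes $\Soc(R_R)$ finitely generated and essential, and to get right artinian one must promote this to every cyclic being finitely cogenerated. But the ``concrete'' justification you offer for that last step is both unnecessary and does not work as stated. Theorem~\ref{one} is a statement about \emph{rings} --- the hypotheses (Kasch, a tightness condition on $E(R_R)$, a condition on direct summands of $E(R_R)$) are ring-theoretic --- so there is no sense in which one can ``run the counting argument of Theorem~\ref{one} on $eR/I$''; $eR/I$ is a cyclic module, not a ring, and none of the hypotheses transfer to it. The intended argument, which is exactly the unlabeled sentence in the paper immediately preceding this corollary, is a one-liner and uses neither the CS decomposition nor any further counting: since $R$ is right CF, $R/I$ embeds in some $R^n$, and a submodule $M$ of a module $N$ with finitely generated essential socle again has finitely generated essential socle, because $\Soc(M)=M\cap\Soc(N)$ is a submodule of the finite-length semisimple $\Soc(N)$, and it meets every nonzero cyclic $mR\subseteq M$ since $\Soc(N)$ is essential in $N$. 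Thus every cyclic is finitely cogenerated and $R$ is right artinian.

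For the FGF part you correctly flag the gap in your first route: knowing $E(M)\cong E(P)$ with $P$ projective does not give an inclusion $M\subseteq P$, so Corollary~\ref{EE} cannot be applied this way. The second route is the correct one, and is essentially what is intended: once $R$ is right artinian (hence right Noetherian) by the CF half, a right Noetherian right FGF ring is QF (Faith). Be careful, however, with the phrasing ``a right perfect --- a fortiori, right artinian --- right FGF ring is QF'': the right perfect statement is not an \emph{a fortiori} weakening but a strictly stronger claim, and it is precisely the kind of partial FGF result that papers like \cite{SR} and Section~3 of this very paper labor to establish; it should not be invoked casually as folklore. Cite the right Noetherian version, which right artinian supplies for free.
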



\noindent The next theorem will be essential for obtaining our new characterizations of right PF rings. Its proof is based on transfinite counting arguments inspired by  \cite[Theorem~1]{O} and \cite[Theorem 6]{GT}. We will say that a ring $R$ has {\em completely nil Jacobson radical} if for any two-sided ideal $N$ of $R$, any element in the Jacobson radical of $R/N$ is nilpotent.


\begin{theorem}\label{nil}
Let $R$ be a right cogenerator right $R$-tight ring. If $R/Z(R_R)$ has completely  nil Jacobson radical, then $\Soc(R_R)$ is finitely generated and essential in $R_R$.
\end{theorem}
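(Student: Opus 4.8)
The plan is to show that $E:=E(R_R)$ is a finite direct sum of injective hulls of simple right $R$-modules; this is equivalent to the desired conclusion that $\Soc(R_R)$ is finitely generated and essential in $R_R$. Accordingly there are two things to establish: that $R_R$ has finite Goldie dimension, so that $E=E_1\oplus\cdots\oplus E_m$ with each $E_i$ indecomposable injective (hence uniform); and that each $E_i$ contains a minimal submodule, which by uniformity is then simple and makes $\Soc(R_R)=\bigoplus_{i=1}^m\Soc(E_i)$ finitely generated and essential in $R_R$. Throughout, recall that $R$ right cogenerator forces $R$ right Kasch, and set $\bar R:=R/Z(R_R)$.

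For the finite-Goldie-dimension part I would run a transfinite counting argument of the type pioneered by Osofsky in \cite[Theorem~1]{O} and refined in \cite[Theorem~6]{GT}. Assume $R_R$ has infinite Goldie dimension; then there is an independent family $\{c_nR\}_{n}$ of nonzero cyclic right ideals of $R$. Using Tarski's result on almost disjoint partitions \cite{Ta}, fix an almost disjoint family $\{A_\lambda\}_{\lambda\in\Lambda}$ of infinite subsets with $|\Lambda|$ strictly exceeding the number of cyclic right ideals of $R$. For each $\lambda$ form the injective submodule $H_\lambda:=E\big(\bigoplus_{n\in A_\lambda}c_nR\big)$ of $E$; right $R$-tightness lets one push suitably chosen cyclic submodules of $H_\lambda$ — ones recording the index set $A_\lambda$ up to finite subsets — into $R_R$, while almost disjointness ensures that $H_\lambda\cap H_\mu$ has finite Goldie dimension for $\lambda\ne\mu$. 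Using the right cogenerator hypothesis to control the isomorphism types of the simple modules occurring inside $R$ (as in \cite{EE}), one then exhibits more pairwise distinct cyclic right ideals of $R$ than $R$ admits, a contradiction; hence $E=E_1\oplus\cdots\oplus E_m$ with each $E_i$ uniform injective.

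For the second part, suppose some $E_i$ has zero socle, and choose a uniform right ideal $U$ of $R$ essential in $E_i$. Since $U$ has no minimal submodule one constructs an infinite strictly descending chain $c_0R\supsetneq c_1R\supsetneq\cdots$ of cyclic submodules of $U$; by right $R$-tightness each $c_nR$ embeds in $R_R$, and by right Kasch each has a simple top. This is the point at which the hypothesis on $\bar R=R/Z(R_R)$ is used: after reduction modulo $Z(R_R)$ and exploitation of the cogenerator property, the chain yields a two-sided ideal $N$ of $\bar R$ together with a non-nilpotent element of $J(\bar R/N)$, contradicting that $\bar R$ has completely nil Jacobson radical. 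Hence each $E_i$ has simple socle, and combining with the first part completes the proof.

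The step I expect to be the main obstacle is the counting, together with the careful bookkeeping of cardinalities it requires. Because a cyclic submodule of $E$ embeds in $R$ only as a right ideal and not necessarily essentially, one cannot invoke Theorem~\ref{one} or the machinery of \cite{EE} directly; the argument must instead keep track, along a Tarski almost disjoint family, of cyclic submodules of $E$ that remain pairwise non-isomorphic as right ideals of $R$, and the hypothesis that $\bar R$ has completely nil Jacobson radical is precisely what prevents the singular ideal $Z(R_R)$ from absorbing these distinctions (equivalently, what forces the relevant descending chains to terminate).
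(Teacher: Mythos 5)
Your decomposition of the problem — first prove $R_R$ has finite Goldie dimension, then show each uniform summand of $E(R_R)$ has simple socle — is different from the paper's, and I think the first half cannot be made to work as you describe. The paper instead proves, in order, that $\Soc(R_R)$ has finitely many \emph{homogeneous components}, that each component is finitely generated, and finally that $\Soc(R_R)$ is essential. The reason for this order is that the cogenerator hypothesis provides a cardinality bound only on the number of \emph{isomorphism classes} of simple right $R$-modules (every simple embeds in $R$, so the set $\Omega(R)$ injects into the set of simple right ideals), not on Goldie dimension or on the cardinality of the set of cyclic right ideals of $R$. Your Tarski counting culminates in ``more pairwise distinct cyclic right ideals of $R$ than $R$ admits,'' but nothing in the hypotheses bounds that quantity, so there is no contradiction to reach. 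To get a usable bound you must count pairwise non-isomorphic \emph{simple} modules, and to manufacture $|\mathcal K| > |I|$ of those from the almost disjoint family you need the machinery that is missing from your sketch: set $S=\End_R(E(R_R))$, $J=J(S)$, use the homomorphism $\Psi\colon R/Z(R_R)\to S/J$ with kernel controlled by $Z(R_R)$, pass to the idempotents $e_K\in S$ with $e_K+J$ central in $S/J$, and then — for each $K\in\mathcal K$ — use $R$-tightness to produce an element $r_K\in R$ with $\Phi(r_K)=s_K\circ e_K+J$, invoke the completely nil hypothesis on $J(R/\mathcal M)$ to show $r_K+\mathcal M\notin J(R/\mathcal M)$, and from this extract a maximal right ideal $L_K$ and a simple module $R/L_K$; centrality of the $e_K+\mathcal N$ is what guarantees the $R/L_K$ are pairwise non-isomorphic. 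Your sketch does not mention $S$, $J$, $\Psi$, or the central idempotents, and it postpones the completely nil hypothesis entirely to part (b), whereas it is already indispensable in the first counting step.

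The second half of your plan also drifts from what is actually needed. Finite Goldie dimension gives $E=E_1\oplus\cdots\oplus E_m$ with $E_i$ uniform, but to conclude you must show each $E_i$ has nonzero socle; a descending chain of cyclic submodules of a uniform ideal $U$, each embeddable in $R$, does not by itself produce a two-sided ideal $N$ of $R/Z(R_R)$ and a non-nilpotent element of $J((R/Z(R_R))/N)$ — that construction again runs through $S/J$ and the central idempotent corresponding to $E(\Soc(R_R))$ (the paper's Step~3), and the pieces you would need (e.g.\ that $E(C_i)$ is a projective cover of a simple $D_i$ with $D_i\cdot r_i\neq 0$) come only after Steps~1 and~2 are in place. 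In short, the high-level Osofsky/Tarski strategy you name is the right one, but the concrete bookkeeping — what is being counted, where the cogenerator bound enters, and how the completely nil hypothesis is converted into non-membership in a Jacobson radical via $\Psi$ — is not in the proposal, and without it the argument does not close.
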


\begin{proof}
Let us first fix our notation. We will denote the injective envelope of $R_R$ by  $E=E(R_R)$ with inclusion $u:R_R\rightarrow E$. Let us set $S=\End_R(E)$ and $J=J(S)$, where $J(S)$ is  the Jacobson radical of $S$. It is shown in \cite[Lemma~1]{GT} that there exists a homomorphism of rings $\Phi:R\rightarrow S/J$ which assigns  any element $r\in R$ to the element $s_r+J$, where $s_r$ is an endomorphism of $E$ which extends the left multiplication by $r$. The kernel of $\Phi$ is the singular ideal $Z(R_R)$ of $R_R$. Therefore, we get an injective homomorphism of rings $\Psi: R/Z(R_R)\rightarrow S/J$ induced by $\Phi$.

Let us now show that $\Soc(R_R)$ is finitely generated and essential in $R_R$.
We are going to prove it in three steps, as in \cite[Theorem 6]{GT}.


{\bf Step 1.} We claim that $\Soc(R_R)$ contains only finitely many homogeneous components.


\noindent We will assume that ${\rm Soc}(R_R)$ has infinitely many homogeneous components and we will try to reach a contradiction.  Let  $\{C_i\}_{i\in I}$ be a representative set of the isomorphism classes of simple modules in ${\rm Soc}(R_R)$. By Tarski's Lemma~\cite{Ta} (see also \cite{O}), there exists a  a family $\mathcal K$ of {\em almost disjoint subsets} of $I$ such that $|\mathcal{K}|\gneqq |I|$ and $I$ is the union of the sets in this family. In other words there exists a family $\mathcal K$ of subsets of $I$ satisfying that:
\begin{itemize}
\item $I=\cup_{K\in\mathcal{K}}K$ and  $|\mathcal{K}|\gneqq |I|$.
\item $|K|$ is infinite for any $K\in\mathcal K$.
\item $|K|=|K'|\gneqq |K\cap K'|$ for any $K,K'\in\mathcal{K}$ with $K\neq K'$.
\end{itemize}

We know that there exists an injective map from the index set I to the family of isomorphism classes of minimal right ideals of $S/J$ \cite[Lemma 3]{GT}. This map assigns any element $i\in I$ to the minimal right ideal $e_iS/e_iJ$ of $S/J$, where $e_i\in S$ is an idempotent such that $e_iE=E(C_i)$.

 Let us take any subset of $I$, say $A$, and set
$$X_A=E\left(\sum \{D\leq\,S/J_
{S/J} \,|
\, D\cong e_{C_i}S/e_{C_i}J \text{ for some } i\in A\}\right)$$

Since $X_A$ is a direct summand of $E$ , there exists an idempotent $e_A\in S$ such that $X_A=e_AE$. We know by \cite[Lemma 4]{GT} that $e_A+J$ is a central idempotent in $S/J$. In particular, $((1-e_I)S+J)/J$ is a two sided ideal of $S/J$ when $A=I$. For simplicity,  $((1-e_I)S+J)/J$ will be denoted by $\mathcal{N}_I/J$. Since $\mathcal{N}_I/J$ is a two-sided ideal, its inverse image $\Psi^{-1}(\mathcal{N_I}/J)$ is a two sided ideal of $R/Z(R_R)$ and we will denote it by $\mathcal{M}_I/Z(R_R)$.

Let $\aleph=|K|$ and let us set
$$\mathcal{N}/J=\mathcal{N}_I/J + \sum\{(e_AS+J)/J\,|\, A\subseteq I\text{ with }|A|\lvertneqq \aleph\}$$
and call $\mathcal{M}/Z(R_R)=\Psi^{-1}(\mathcal{N}/J)$. By \cite[Lemma 7]{GT}, we know that $\{e_K+\mathcal{N}\,|\,K\in\mathcal{K}\}$ is an orthogonal family of nonzero central idempotents in $S/\mathcal{N}$.

Let $u$ be the inclusion of $R_R$ in its injective envelope $E$ and call $x_K=e_{K}\circ u(1)\in E$. Then $e_{K}\circ u$ factors as

$$
\begin{xy}
  \xymatrix{
    R \ar[d]_u\ar[r]^{f_{K}} & x_{K}R\ar[d]^{u_{K}} \\
    E \ar[r]_{e_K} & E  \\
    }
\end{xy}
$$
where $f_{K}$ is an epimorphism and $u_{K}$, a monomorphism. There exists a monomorphism $\alpha_{K}:x_{K}R\rightarrow R$ by our assumption that any cyclic  submodule of $E(R_R)$ embeds in $R$. By injectivity,  $\alpha_{K}$ extends to  an $s_{K}:E\rightarrow E$ such that $u\circ\alpha_{K}=s_{K}\circ u_{K}$.
Again, as  $s_{K}|_{E(x_{K}R)}:E(x_{K}R)\rightarrow E$ is a monomorphism, there exists an $h_{K}:E\rightarrow E$ such that $h_{K}\circ s_{K}\circ e_{K}=e_{K}$.

$$\begin{xy}
  \xymatrix{
   R \ar[d]_u\ar[r]^{f_{K}} & x_{K}R\ar[d]^{u_{K}}\ar [r]^{\alpha _{K}} &
   R\ar[d]^{u} \\
    E \ar[r]_{e_K} & E\ar[r]_{s_{K}}& E  \ar[r]_{h_{K}} &E.\\
    }
\end{xy}
$$

Call $r_K=s_K\circ e_K\circ u(1)\in R$. Our claim is that $r_K+\mathcal{M}\notin J(R/\mathcal{M})$. Assume otherwise that $r_{K}+\mathcal{M}\in J(R/\mathcal{M})$. As we are assuming that   $J(R/Z(R_R))$ is completely nil, we deduce that any element in $J(R/\mathcal{M})$ is nilpotent.  Thus, there exists a natural number $m\geq 1$ such that $r_{K}^m+\mathcal{M}=0$ in $R/\mathcal{M}$. But then, $s^m_K\circ e_K+\mathcal{N}=\Phi(r^m_K+\mathcal{M})=0$. Therefore, we get that

$$\begin{array}{ll}
0 & =h_K^m\circ s_K^m\circ e_K + \mathcal{N} = h_K^{m-1}\circ (h_K\circ s_K\circ e_K) \circ s_K^{m-1} +\mathcal{N}
\\ & = h_K^{m-1}\circ e_K \circ s_K^{m-1}+\mathcal{N}=\ldots = h_K\circ s_K\circ e_K+\mathcal{N}=e_K+\mathcal{N}
\end{array}.$$
But it is a contradiction since  $e_K$ does not belong $\mathcal{N}$ by construction.

 As $r_K+\mathcal{M}\notin J(R/\mathcal{M})$, there exists a maximal right ideal $L_K/\mathcal{M}$ of $R/\mathcal{M}$ such that $r_K+\mathcal{M}\notin L_K/\mathcal{M}$.  Thus, $R/L_K$ is a simple right $R$-module which satisfies  $R/L_K\cdot (r_K+\mathcal{M})\neq 0$.

We finally claim that $R/L_K\ncong R/L_{K'}$  when $K\neq K'$ with $K,K'\in \mathcal{K}$. Assume that   $\delta:R/L_K\rightarrow R/L_{K'}$ is an isomorphism. We get that
$$0\neq \delta(r_K+L_K)=\delta(1+L_K)\cdot (r_K+\mathcal{M}).$$

In particular, $\delta(1+L_K)\neq 0$. On the other hand, $\delta(1+L_K)$ is a generator of $R/L_{K'}$, since it is simple. And we know that $(R/L_K')\cdot (r_{K'}+\mathcal{M})\neq 0$, which assures the existence of an $r\in R$ such that $0\neq \delta(1+L_K)\cdot (r_Krr_{K'}+\mathcal{M})$. But then, $r_Krr_{K'}+\mathcal{M}\neq 0$ and thus, $\Psi((r_Krr_{K'}+\mathcal{M}))\neq 0$, by the injectivity of  $\Psi$ . We deduce that
$$s_K\circ h_K\circ e_K\circ \Psi(r+\mathcal{M})\circ s_{K'}\circ h_{K'}\circ e_{K'}+\mathcal{N}\neq 0.$$

But both idempotents are central in $S/\mathcal{N}$ and so,  $e_K\circ e_{K'}\notin \mathcal{N}$. And this means that $K=K'$, since otherwise  $e_K\circ e_{K'}\in \mathcal{N}$ by construction.

We have constructed then a family $\{R/L_K\}_{K\in\mathcal K}$ of non isomorphic simple right $R$-modules with  $|\mathcal{K}|\gvertneqq |I|$ isomorphism classes of right simple modules. This is a contradiction since we have assumed that $R_R$ has $|I|$ non isomorphic classes of simple right modules. Therefore, $Soc(R_R)$ must have  only finitely many homogeneous components.


{\bf Step 2.} We claim that any homogeneous component of ${\rm Soc}(R_R)$ is finitely generated.


\noindent We know by Step 1 that there are only finitely many homogeneous components in ${\rm Soc}(R_R)$. Let $\{C_1,\ldots,C_m\}$ be a representative set of simple modules belonging to them.
As $R_R$ is a cogenerator, there exist sets of orthogonal idempotents $\{r_i\in R\,|\,i=1,\ldots,m\}$ and $\{e_i\in E\,|\,i=1,\ldots,m\}$ such that $E(C_i)=r_iR=e_iE$ for each $i=1,\ldots,m$. In particular, this means that $\Psi(r_i+Z(R_R))=e_i+J$. And $r_i+Z(R_R)$ does not belong to the Jacobson radical of $R/Z(R_R)$ for any $i=1,\ldots,m$, since they are idempotent. Moreover,
 $E(C_i)$  is the projective cover of a simple right module $D_i$ since they are indecomposable injective direct summands of $R_R$.
Note also that,  $D_i\ncong D_j$ when $i\neq j$ and $D_i\cdot r_i\neq 0$ for each $i\in I$, by construction.

Now assume that some homogeneous component is not finitely generated. Say that it is the homogeneous component associated to $C_1$.
Repeating  the arguments in Step 1, but replacing $K$ by $A=\{i\}$ and $\mathcal{N}$ by $\mathcal{N}'=\Soc(S/J_{S/J})$, we may construct a central idempotent $e_A+J\in S/J$ such that $(e_AS+J)/J$ is the injective envelope of the homogeneous component corresponding to $e_1S/e_1J$ inside $S/J$. And we can find an element $r_A+\Psi^{-1}(\mathcal{N}')$ which does not belong to $J(R/\Psi^{-1}(\mathcal{N}'))$ and  $\Psi(r_A+J)=h_A\circ s_A\circ \circ e_A\notin \Soc(S/J_{S/J})$.
Let us choose a maximal right ideal $L/\Phi^{-1}(\mathcal{N}')$ of $R/\Phi^{-1}(\mathcal{N}')$ satisfying that $r_A+ L\neq 0$ in $R/\Phi^{-1}(\mathcal{N}')$. Note that this maximal right ideal does exist since
$r_A+\Phi^{-1}(\mathcal{N}')\notin J(R/\Phi^{-1}(\mathcal{N}'))$.
This means that, if we set $D=R/L$, this is a simple right $R$-module such that $D\cdot (r_A+R/\Phi^{-1}(\mathcal{N}'))\neq 0$.

We claim that $D$ is not isomorphic to $D_i$ for any $i=1,\ldots,m$. Assume on the contrary  that $\delta:D\rightarrow D_i$ is an isomorphism for some $i=1,\ldots,m$.
And fix a nonzero element $x\in D_i$ such that $x\cdot(r_i+Z(R_R))\neq 0$. This means that $\delta(x)\cdot(r_i+Z(R_R))\neq 0$ and so, it is a generator of $R/L$. Since $D\cdot(r_A+\Phi^{-1}(\mathcal{N}'))\neq 0$,  there exists an $r\in R$ such that $x\cdot(r_i r r_A+J)\neq 0$
in $D$.  And therefore,  $r_Arr_i\notin \Phi^{-1}(\mathcal{N}')$, because $D$ is a right $R/\Phi^{-1}(\mathcal{N}')$-module.
But then, $e_i\circ \Phi(r+J)\circ h_A\circ s_A\circ e_A+J=\Phi(r_irr_A+ Z(R_R))\notin \Soc(S/J_{S/J})$.
And this is a contradiction since $(e_i+J)/J\in {Soc}(S/J)$ and ${\rm Soc}(S/J)$ is a two sided ideal.

We have shown that each homogeneous component of ${Soc}(R_R)$ is finitely generated and so we proved that  ${Soc}(R_R)$ is finitely generated.


{\bf Step 3.} We finally claim that $\Soc (R_R)$ is essential in $R_R$.


\noindent Repeating the arguments of Step 2, we may construct sets of orthogonal idempotents $\{r_1,\ldots,r_m\}$ and $\{e_1,\ldots,e_m\}$ in $R$ and $S$ associated to a representative family $\{C_1, \dots, C_m \}$ of the isomorphism classes of the simple right ideals of $R$  such that $E(C_i)=e_iE=r_iR$ and  $\Psi(r_i+Z(R_R))= e_i+J$ for each $i=1,\ldots,m$.
Let  $D_i$ be  a simple module such that $E(C_i)$ is a projective cover of $D_i$. We get that $D_i\cong D_j$ when $i\neq j$ and $D_i\cdot r_i\neq 0$ for each $i=1,\ldots, m$.

Assume that $\Soc(R_R)$ is not essential in $R_R$. This means that $E(\Soc (R_R))\neq
E(R_R)$. Let $e_I\in S$ be the idempotent such that  $E(\Soc (R_R))=e_IE$. The arguments of Step ~1 show that $e_I+J$ is central in $S/J$.

Repeating the arguments used in Step 2, but by replacing the idempotent $e_A$ by  $1-e_I$ and the ideal $\mathcal{N}'$ by $J$, we get an $r_I\in R$ such that $r_I+Z(R_R)\notin J(R/Z(R_R))$ and elements $s_I,h_I\in S$ such that $h_I\circ s_I\circ (1-e_I)\notin J$ and $\Phi(r_I+Z(R_R))=s_I\circ (1-e_I)+J$. Therefore, there exists a maximal right ideal $L/Z(R_R)$ of $R/Z(R_R)$ such that $r_I+Z(R_R)\notin L/Z(R_R)$ . In particular, if we call $D=R/L$, we get that $D\cdot (r_I+Z(R_R))\neq 0$.

We claim that $D\ncong D_i$ for any $i=1,\ldots,m$. Let us assume on the contrary that $\delta:D\rightarrow D_i$ is an isomorphism and choose an $x\in D$ such that $x\cdot(r_I+Z(R_R))\neq0$. We then obtain that $\delta(x)\cdot (r_I+Z(R_R))\neq 0$ as in Step 2. And thus, there exists an $r\in R$ satisfying that $\delta(x)\cdot(r_Irr_i+Z(R_R))\neq 0$.
Hence $r_Irr_i+Z(R_R)\neq 0$ and therefore $h_I\circ s_I\circ (1-e_I)\circ \Psi(r+Z(R_R))\circ e_i+J\neq 0$ in $S/J$.
And, as $(1-e_I)+J$ is central in $S/J$, we deduce that $(1-e_I)\circ e_i+J\neq 0$. But this is not possible because $e_I\cdot e_i=e_i$ by construction of $e_I$. So we get a contradiction which shows $\Soc(R_R)$ is essential in $R_R$.
\end{proof}


\noindent We can now state our new characterizations of right PF rings.

\begin{theorem}\label{two} Let $R$ be a ring. Then the following conditions
are equivalent:
\begin{enumerate}
	\item $R$ is right PF.
	\item $R_R$ is a cogenerator and every cyclic submodule of $E(R_R)$ essentially embeds in a projective module.
	\item $R$ is a right ($R$-) tight cogenerator and $J(R/Z(R_R))$ is completely nil.
	\item $R_R$ is tight, $R$ is semilocal, and the injective envelopes of simple right $R$-modules are finitely
	generated.
\end{enumerate}
\end{theorem}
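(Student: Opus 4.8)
The plan is to prove $(1)\Rightarrow(i)$ for $i=2,3,4$ and $(i)\Rightarrow(1)$ for $i=2,3,4$. The implications out of $(1)$ are routine. If $R$ is right PF then $R_R$ is an injective cogenerator, so $E(R_R)=R_R$ and every cyclic (resp.\ finitely generated) submodule of $E(R_R)$ is a right ideal, hence embeds in $R$; moreover its injective hull inside the injective module $R_R$ is a direct summand of $R_R$, i.e.\ a projective module, so it even essentially embeds in a projective module. This yields $(2)$ and the tightness parts of $(3)$ and $(4)$. Furthermore $R$ is semiperfect by \cite[Theorem~1]{O}, so $R$ is semilocal and $R/J(R)$ is semisimple artinian, while $Z(R_R)=J(R)$ since $R$ is right self-injective; hence every two-sided quotient of $R/Z(R_R)=R/J(R)$ is semisimple with zero Jacobson radical, so $J(R/Z(R_R))$ is completely nil, giving $(3)$; and since $R$ is right Kasch and $R_R=\bigoplus e_jR$ is a finite direct sum of indecomposable injective right ideals, each injective hull of a simple right module equals one of the $e_jR$ and is therefore finitely generated, giving $(4)$.

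For the converse implications, all roads lead to showing that $R_R$ is injective, and the first task is to produce a finitely generated essential socle. For $(2)\Rightarrow(1)$ we verify the hypotheses of Theorem~\ref{one}: a cogenerator is right Kasch; a cyclic submodule of $E(R_R)$ that essentially embeds in a projective module embeds in a free module; and, arguing exactly as in the proof of Corollary~\ref{EE}, every direct summand of $E(R_R)$ contains an essential projective submodule $P$ with $P/(P\cdot Z(R_R))$ finitely generated. Hence $\Soc(R_R)$ is finitely generated and essential. For $(3)\Rightarrow(1)$ this is precisely the content of Theorem~\ref{nil}. In both cases we may therefore write $E(R_R)=\bigoplus_{j=1}^{k}E(T_j)$ with $T_1,\dots,T_k$ simple modules and $k$ the finite composition length of $\Soc(R_R)$.

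Next comes the upgrade to self-injectivity. Each $E(T_j)$ is a uniform injective module, so $T_j$, being essential in it, is contained in every nonzero submodule of $E(T_j)$. Embed $E(T_j)$ into a product $R^{I}$ via the cogenerator property; the kernels of the coordinate restrictions intersect in $0$, so one of them must be $0$, i.e.\ some coordinate restricts to a monomorphism $E(T_j)\hookrightarrow R$, whose image, being injective, is a direct summand of $R_R$; in particular each $E(T_j)$ is cyclic. Assembling these, $E(R_R)$ embeds in $R^{k}$, and being injective it is a finitely generated projective direct summand of $R^{k}$. Now the tightness hypothesis is brought to bear to embed the finitely generated module $E(R_R)$ into $R$; its image is then an injective, hence direct, summand $R_R=E'\oplus D$ with $E'\cong E(R_R)$, and since $\Soc(R_R)$ and $\Soc(E')$ have the same composition length $k$ we get $\Soc(D)=0$, whence $D=0$ because $\Soc(R_R)$ is essential. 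Thus $R_R\cong E(R_R)$ is right self-injective, and being also right Kasch, $R$ is right PF. For $(4)\Rightarrow(1)$ one must first extract, from the hypotheses that $R_R$ is tight, $R$ is semilocal, and the finitely many injective hulls $E(S_i)$ of simple right modules are finitely generated, that $R$ is right Kasch and that $\Soc(R_R)$ is finitely generated and essential; this is done by a transfinite counting argument in the spirit of Theorems~\ref{one} and \ref{nil}, now using the finite generation of the $E(S_i)$ to bound the relevant families of orthogonal idempotents in $\End(E(R_R))$, after which the same upgrade applies.

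The main obstacle is the step converting tightness into an embedding $E(R_R)\hookrightarrow R$. Under condition $(4)$ one has ``tight'' for finitely generated submodules, so this is immediate since $E(R_R)$ is itself finitely generated. Under $(2)$ and $(3)$, however, only cyclic submodules of $E(R_R)$ are controlled, so one cannot embed $E(R_R)=\bigoplus_{j=1}^{k}E(T_j)$ into $R$ in one stroke; instead one must build the embedding one homogeneous component of $\Soc(R_R)$ at a time, using that each $E(T_j)$ is cyclic and that the uniform injective hulls of non-isomorphic simple modules meet in $0$ inside $E(R_R)$, and, to treat homogeneous components of multiplicity greater than one, exploiting the additional structure on $S=\End(E(R_R))$ (the central idempotents and the ring homomorphism $\Psi\colon R/Z(R_R)\to S/J$) produced in the proofs of Theorems~\ref{one} and \ref{nil}. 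This bookkeeping, together with the counting argument needed for the Kasch and finite-essential-socle properties in $(4)\Rightarrow(1)$, is where the real effort is concentrated.
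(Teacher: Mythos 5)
Your implications out of $(1)$ are fine, and you correctly identify Theorem~\ref{one} and Theorem~\ref{nil} as the engines that produce a finitely generated essential right socle from $(2)$ and $(3)$. But in both return directions your proposal diverges from the paper's route and leaves the hardest steps unresolved.

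For $(4)\Rightarrow(1)$ you propose to run a fresh transfinite counting argument ``in the spirit of Theorems~\ref{one} and~\ref{nil}'' to extract that $R$ is right Kasch and that $\Soc(R_R)$ is finitely generated and essential. This is a genuine misdirection: the semilocal hypothesis in $(4)$ already gives you a hard finite bound with no counting at all. Writing $R/J=\bigoplus_{i=1}^{n}D_i$, the paper argues that if $\Soc(R_R)$ contained $n+1$ independent simples $C_1,\dots,C_{n+1}$, then $\bigoplus_{k=1}^{n+1}E(C_k)$ is a finitely generated submodule of $E(R_R)$ (each $E(C_k)$ is finitely generated by hypothesis), hence embeds in $R$ by tightness, producing $n+1$ orthogonal nonzero idempotents in $R$ and thus $n+1$ nonzero summands of $R/J$ --- a contradiction. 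The essentiality of $\Soc(R_R)$ is handled by the same length count with $e=1-\sum e_k$, and the Kasch property is not an input but a \emph{conclusion}, read off at the very end once one has $R\cong\bigoplus_k E(C_k)$ and observes that the tops $e_kR/e_kJ$ realize every simple. Your plan inverts this order and calls for a far more delicate tool than is required; as written it is an announcement, not a proof.

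For $(2)\Rightarrow(1)$ and $(3)\Rightarrow(1)$ you try to go directly to self-injectivity by embedding $E(R_R)=\bigoplus_{j=1}^{k}E(T_j)$ into $R$ and you correctly observe the obstruction: under $(2)$ and $(3)$ only \emph{cyclic} submodules of $E(R_R)$ are controlled, so the finitely generated module $E(R_R)$ does not obviously embed in $R$. Your suggested remedy --- building the embedding one homogeneous component at a time and ``exploiting the additional structure on $S=\End(E(R_R))$'' --- is exactly the step you do not carry out, and it is unclear how the idempotents and the map $\Psi:R/Z(R_R)\to S/J$ from Theorem~\ref{nil} would produce a single monomorphism $E(R_R)\hookrightarrow R$. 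The paper avoids this entirely by proving $(2)\text{ or }(3)\Rightarrow(4)$ first: from the finitely generated essential socle and the cogenerator hypothesis, each $E(C_i)$ is a direct summand of $R_R$, hence projective, local, and finitely generated; this makes each $E(C_i)$ a projective cover of a simple and forces $R$ to be semiperfect (so semilocal), verifying the semilocal and finite-generation clauses of $(4)$. The injectivity upgrade is then exactly the $(4)\Rightarrow(1)$ argument described above, and does not require assembling an embedding piecewise. You should restructure along the chain $(2)\text{ or }(3)\Rightarrow(4)\Rightarrow(1)$ rather than attempting the direct jump.
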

\begin{proof} (1) $\Rightarrow$ (2). This is straightforward.

(1) $\Rightarrow$ (3). $R$ is a right self-injective semiperfect ring. Therefore, $J(R)=Z(R_R)$ and $R/Z(J(R))$ is von Neumann regular. Thus, any ring which is a homomorphic image of $R/J(R)$ has zero Jacobson radical.

(2) or (3) $\Rightarrow$ (4). By Theorem~\ref{one} and Theorem~\ref{nil}, we get that $\Soc(R_R)$ is finitely generated and essential in $R$. In particular, there exists a finite number of isomorphism classes of simple modules, say $\{C_1,\ldots,C_m\}$.  Moreover, as $R_R$ is a cogenerator, the injective envelopes of simple right $R$-modules are direct
summands of $R$ and thus, they are projective and finitely generated.
Since each
$E(C_i)$ is projective, it is a local module and hence it is the projective cover of the simple module $E(C_i)/E(C_i)J(R)$. Note that $E(C_i)/E(C_i)J(R)$ is not isomorphic to $E(C_{i'})/E(C_{i'})J(R)$ if $i\neq i'$. Thus, each simple right module has a projective cover and this means that $R$ is semiperfect by \cite[Theorem~27.6]{AF}. Therefore, $R$ is semilocal.

(4) $\Rightarrow$ (1). We first show that $\Soc(R_R)$ is finitely generated. As we know that $R$ is semilocal, we may write $R/J=\oplus_{i=1}^nD_i$ with each $D_i$, a simple module. Assume that $\Soc(R_R)$ is not finitely generated and choose a direct sum $\oplus_{k=1}^{n+1}C_k$ of simple modules in $\Soc(R_R)$. By hypothesis, $\oplus_{k=1}^{n+1}E(C_k)$ is a finitely generated submodule of $E(R_R)$ and thus, it embeds in $R$. This means that there exists a set $\{e_k\,|\,k=1,\ldots,n+1\}$ of nonzero orthogonal idempotents in $R$ such that $E(C_k)=e_kR$ for each $k=1,\ldots,n+1$. But this means that $\oplus_{k=1}^{n+1}e_kR/e_kJ$ is a direct sum of $n+1$ nonzero submodules of $R/J$. A contradiction, since $R/J$ is a semisimple ring of length $n$.

Now we claim that $\Soc(R_R)$ is essential in $R$. We know that $\Soc(R_R)=\oplus_{k=1}^nC_k$ is finitely generated. So $\oplus_{k=1}^nE(C_k)$ is also finitely generated by hypothesis and it embeds in $R$. Let $e_k\in R$ be an idempotent such that $E(C_k)=e_kR$ for each $k$. Assume that $\oplus_{k=1}^nE(C_k)$ is not essential in $R$ and call $e=1-\sum_{k=1}^n e_k$. Then $R/J=(\oplus_{k=1}^ne_kR/e_kJ)\oplus eR/eJ$. Again a contradiction since the length of $R/J$ is $n$.

Therefore, $R=\oplus_{k=1}^n e_kR\cong \oplus_{k=1}^nE(C_k)$ is a right self-injective ring and thus $R/J=\oplus_{k=1}^n e_kR/e_kR$. Let us note that if
$e_kR/e_kJ\ncong e_{k'}R/e_{k'}J$, then $e_kR\ncong e_{k'}R$ either, as they are their projective covers. And thus, $C_k\ncong C_{k'}$. This means that $R$ must contain all isomorphism classes of simple right $R$-modules and so, it is a right cogenerator.
\end{proof}
	

\noindent Motivated by the above theorem, we would like to propose the following conjecture.

\begin{conjecture} \label{conj2}
If $R$ is a right cogenerator right $R$-tight ring, then $R$ is right PF.
\end{conjecture}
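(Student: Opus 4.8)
The plan is to reduce the conjecture to a single point. By the implication $(3)\Rightarrow(1)$ of Theorem~\ref{two} — whose proof passes through Theorem~\ref{nil} and needs only right $R$-tightness — a right cogenerator right $R$-tight ring $R$ is right PF as soon as $J(R/Z(R_R))$ is completely nil: Theorem~\ref{nil} then makes $\Soc(R_R)$ finitely generated and essential, the cogenerator hypothesis turns each $E(C_i)$ into a local projective direct summand of $R$, so every simple right $R$-module has a projective cover and $R$ is semiperfect; finally, for a semiperfect ring each primitive idempotent $e_j$ satisfies $e_jR\cong E(C_i)$ for some $i$ by the cogenerator hypothesis and a Krull--Schmidt count, forcing $R_R$ to be a finite direct sum of injectives, so $R$ is right self-injective and hence right PF. Thus the whole task is: \emph{show that a right cogenerator right $R$-tight ring $R$ has $J(R/Z(R_R))$ completely nil.}

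For this I would argue through endomorphism rings. Put $E=E(R_R)$, $S=\End_R(E)$ and $J=J(S)$, and recall the injective ring homomorphism $\Psi:R/Z(R_R)\to S/J$ of \cite[Lemma~1]{GT}. Since $E$ is injective, $S/J$ is von Neumann regular. Thus, if one shows that $\Psi$ is surjective — equivalently, that every endomorphism of $E$ agrees, modulo a homomorphism with essential kernel, with left multiplication by an element of $R$ — then $R/Z(R_R)\cong S/J$ is von Neumann regular, so every ring homomorphic image of $R/Z(R_R)$ has zero Jacobson radical and the completely nil condition holds trivially. Surjectivity of $\Psi$ is precisely where the cogenerator hypothesis should be spent: $E$ contains, as direct summands, a copy of $R$ and copies of the injective hulls of all simple right $R$-modules (each of which already embeds in $R$), and the task is to use this abundance of summands to approximate an arbitrary element of $S$ by a left multiplication. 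Failing a clean surjectivity statement, the fallback is to re-run verbatim the three-step transfinite counting argument in the proof of Theorem~\ref{nil}, replacing its one use of the completely nil hypothesis — the passage where, from $r_K+\mathcal M\in J(R/\mathcal M)$, one extracts $r_K^{\,m}+\mathcal M=0$ and then telescopes $0=h_K^{\,m}\circ s_K^{\,m}\circ e_K+\mathcal N=\cdots=e_K+\mathcal N$ — by a substitute contradiction produced from the cogenerator property, e.g. by using an injective hull of a simple quotient of $R/\mathcal M$ (which lives inside $R$) to manufacture the needed central-idempotent obstruction directly.

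I expect the surjectivity of $\Psi$ — equivalently, the elimination of the completely nil hypothesis from Theorem~\ref{nil}, or equivalently still, the closing of the gap between ``right $R$-tight'' and ``right tight'' (in the latter, finitely generated submodules of $E(R_R)$ embed in $R$, so knowing in addition that $E(R_R)$ is finitely generated would force $E(R_R)=R$ at once) — to be the main and essentially the only obstacle. It is exactly the point at which the currently known instances of the conjecture rely on extra structure: self-injectivity makes $E=R$, $S=R$ and $\Psi$ the identity, while in the extending case the relevant central idempotents are produced by hand. That is why the statement is offered here only as a conjecture rather than as a theorem.
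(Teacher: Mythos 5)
This statement is labelled a \emph{conjecture} in the paper and the authors do not prove it; they explicitly flag it as an open problem and devote the remainder of the section to partial positive answers under extra hypotheses (self-injectivity, extending, automorphism-invariant, completely nil radical, etc.). So there is no proof in the source to compare against, and your write-up — which you frame as a ``plan'' rather than a proof — correctly recognises this.

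Your reduction is sound as far as it goes: Theorem~\ref{two}(3)$\Rightarrow$(1) does say that a right cogenerator right $R$-tight ring with $J(R/Z(R_R))$ completely nil is right PF, so the whole conjecture is equivalent to showing that the right cogenerator plus right $R$-tight hypotheses force $J(R/Z(R_R))$ to be completely nil. You also correctly identify the two known routes that \emph{would} give this: either (a) prove $\Psi:R/Z(R_R)\hookrightarrow S/J$ is surjective, which yields von~Neumann regularity of $R/Z(R_R)$, or (b) replace the single use of the completely nil hypothesis inside the Step~1 counting argument of Theorem~\ref{nil} with some other obstruction manufactured from the cogenerator property. But neither (a) nor (b) is carried out, and in fact neither is known: surjectivity of $\Psi$ is a very strong statement (it would make $R/Z(R_R)$ regular, something one only knows \emph{a posteriori} if $R$ is PF), and there is no visible mechanism by which ``$R_R$ cogenerates'' alone supplies the missing nilpotency or the missing central-idempotent contradiction. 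The authors themselves remark, immediately after Theorem~\ref{nil}, that the $R$-tight hypothesis is what allows the telescoping $0 = h_K^m\circ s_K^m\circ e_K + \mathcal{N} = \cdots = e_K + \mathcal{N}$, and that they do not know how to proceed without some nilpotency assumption; that is precisely why the paper leaves this as Conjecture~\ref{conj2}. In short: your proposal is an accurate map of the terrain and of where the obstruction lies, but it does not — and given the current state of the problem cannot — constitute a proof. There is a genuine gap, and you have named it yourself: establishing either the surjectivity of $\Psi$ or the completely nil property of $J(R/Z(R_R))$ from the stated hypotheses is exactly the open content of the conjecture.

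One small inaccuracy worth correcting in your sketch of (4)$\Rightarrow$(1): the paper does not use the cogenerator hypothesis in that implication; rather it uses semilocality together with tightness to show $\Soc(R_R)$ is finitely generated and essential and then deduces $R = \bigoplus_k E(C_k)$, with the cogenerator property emerging as a \emph{conclusion}. This does not affect the validity of your reduction via (3)$\Rightarrow$(1), but it is a misattribution of where the hypotheses are spent.
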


We are now going to obtain several corollaries of Theorem \ref{two} which will give partial answers to the above proposed conjecture. Recall that a ring $R$ is called right {\it automorphism-invariant} if it is invariant under any automorphism of its injective envelope $E(R_R)$ (see e.g. \cite{GSr}).

\begin{corollary}\label{ai}
Let $R$ be a right $R$-tight, right automorphism-invariant ring such that $R_R$ is a cogenerator. Then $R$ is right PF.
\end{corollary}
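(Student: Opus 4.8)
The plan is to reduce this to Theorem~\ref{two}(4) $\Rightarrow$ (1), so it suffices to verify that a right automorphism-invariant right cogenerator ring $R$ is semilocal and that the injective envelopes of simple right $R$-modules are finitely generated; then, since $R_R$ is tight by hypothesis, Theorem~\ref{two} delivers that $R$ is right PF. The key structural input is the known decomposition theory of automorphism-invariant modules: a right automorphism-invariant ring $R$ decomposes (at least up to the relevant quotient) as $R = S \times T$ where $S_S$ is square-free and $T$ is a right self-injective ring whose factor $T/J(T)$ is von Neumann regular; more usefully, for automorphism-invariant modules one has that $R/J(R)$ is von Neumann regular and idempotents lift modulo $J(R)$, and $R$ is right self-injective relative to the part on which no non-trivial automorphisms act. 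I would cite the relevant structure theorem from \cite{GSr} for this.

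First I would observe that since $R_R$ is a cogenerator it is right Kasch, so every simple right $R$-module embeds in $R$; because $R$ is right automorphism-invariant, $J(R) = Z(R_R)$ (automorphism-invariant modules are quasi-injective relative to the nonsingular part and have singular radical), and $R/J(R)$ is von Neumann regular with idempotents lifting modulo the radical. Second, I would use these facts to show $R$ is semilocal: a von Neumann regular ring that is a homomorphic image of a right Kasch cogenerator ring and over which injective envelopes of simples are well-behaved must be semisimple — more directly, since $R_R$ is a cogenerator, the injective envelope of each simple is a direct summand of $R_R$, hence a projective (indeed local) module, so each simple right module has a projective cover; if there were infinitely many isomorphism classes of simples one would contradict the counting already carried out in Theorem~\ref{nil} (note that $J(R/Z(R_R)) = 0$ is trivially completely nil here, so Theorem~\ref{nil} applies directly and gives $\Soc(R_R)$ finitely generated and essential, hence finitely many classes of simples). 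Third, with finitely many simples whose injective envelopes are direct summands of $R_R$, those injective envelopes are finitely generated, and $R$ being semiperfect follows from \cite[Theorem~27.6]{AF} exactly as in the proof of Theorem~\ref{two}. Then condition (4) of Theorem~\ref{two} is satisfied and we conclude $R$ is right PF.

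The main obstacle I expect is pinning down precisely which structural fact about right automorphism-invariant rings is needed: one must be careful because a right automorphism-invariant ring need not be right self-injective in general, only when the underlying module has no ``exceptional'' square-free summand over the field $\mathbb{F}_2$. The cleanest route is probably to avoid self-injectivity altogether and argue purely through Theorem~\ref{nil}: the hypothesis $J(R/Z(R_R))$ completely nil in Theorem~\ref{two}(3) is automatic here because $R/Z(R_R) = R/J(R)$ is von Neumann regular (so every homomorphic image has zero, hence completely nil, Jacobson radical), and $R_R$ being tight and a cogenerator then gives PF directly from Theorem~\ref{two}(3) $\Rightarrow$ (1). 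So the real content is just the single lemma $Z(R_R) = J(R)$ and $R/J(R)$ von Neumann regular for right automorphism-invariant $R$, which is exactly \cite{GSr}; everything else is Theorem~\ref{two}.
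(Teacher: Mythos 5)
Your ``cleanest route'' in the final paragraph is exactly the paper's proof: cite \cite{GSr} for the facts that $R$ is semiregular and $Z(R_R) = J(R)$, deduce that $R/Z(R_R)$ is von Neumann regular so $J(R/Z(R_R))$ is completely nil trivially, and apply Theorem~\ref{two}(3) $\Rightarrow$ (1). The longer preliminary discussion going through Theorem~\ref{two}(4) and the decomposition theory of automorphism-invariant rings is unnecessary, as you yourself conclude, so this is essentially the same argument as the paper's.
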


\begin{proof}
It is shown in \cite{GSr} that $R$ is semiregular and $Z(R_R)$ is the Jacobson radical of $R$. Therefore, $R/Z(R_R)$ is von Neumann regular and this means that the Jacobson radical of $R/Z(R_R)$ is completely nil.
The result now follows from Theorem~\ref{two}
\end{proof}


It is clear that if $R$ is a right extending ring, then
every cyclic submodule of $E(R_R)$ essentially embeds in a projective module if and only if $R_R$ is $R$-tight. Therefore, we have:

\begin{corollary}\label{three} Let $R$ be a ring. Then the following
conditions are equivalent:
\begin{enumerate}
	\item $R$ is right PF.
	\item $R_R$ is $R$-tight, extending and a cogenerator.
	\item $R_R$ is $R$-tight, extending and the injective envelopes of simple right  $R$-modules are projective.
\end{enumerate}
\end{corollary}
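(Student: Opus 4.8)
The plan is to establish the cycle $(1)\Rightarrow(2)\Rightarrow(1)$ and $(1)\Rightarrow(3)\Rightarrow(1)$, with the implication $(3)\Rightarrow(1)$ carrying essentially all the weight.

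The easy directions are almost formal. If $R$ is right PF, then $R_R$ is right self-injective, so $E(R_R)=R_R$; hence every submodule of $E(R_R)$ embeds in $R$ (so $R_R$ is trivially $R$-tight), $R_R$ is injective and therefore extending, $R_R$ is a cogenerator, and for each simple right module $S$ the copy of $E(S)$ contained in the cogenerator $R_R$ is a direct summand of $R_R$, hence projective. This gives $(1)\Rightarrow(2)$ and $(1)\Rightarrow(3)$. For $(2)\Rightarrow(1)$: the observation preceding the corollary says that, since $R_R$ is extending, being $R$-tight is equivalent to the statement that every cyclic submodule of $E(R_R)$ essentially embeds in a projective module; together with the cogenerator hypothesis this is exactly condition~(2) of Theorem~\ref{two}, so $R$ is right PF.

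It remains to prove $(3)\Rightarrow(1)$. First I would note that the assumption that $E(S)$ is projective for every simple right module $S$ forces $R$ to be right Kasch: if $E(S)$ is a direct summand of a free module $F=R^{(\Lambda)}$, then, passing to socles, $S=\Soc(E(S))$ embeds into $\Soc(F)=\Soc(R_R)^{(\Lambda)}$, and since $S$ is simple it embeds into $\Soc(R_R)$, hence into $R_R$. Next, by the observation every cyclic submodule of $E(R_R)$ essentially embeds in a projective module, and by $R$-tightness every cyclic submodule of $E(R_R)$ embeds in a free module; moreover, since $R_R$ is extending, every direct summand $E'$ of $E(R_R)$ contains an essential direct summand $eR$ of $R_R$ (because $E'\cap R_R$ is essential in $E'$ and essential in some summand $eR$ of $R_R$, which forces $E'=E(eR)$), and $eR/(eR\cdot Z(R_R))$ is finitely generated as $eR$ is cyclic. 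Hence Theorem~\ref{one} applies and $\Soc(R_R)$ is finitely generated and essential in $R_R$. In particular there are only finitely many isomorphism classes of simple right modules, say $C_1,\dots,C_m$ — all of them occurring in $\Soc(R_R)$ by the Kasch property — and $E(R_R)=E(\Soc(R_R))=\bigoplus_{i=1}^{m}E(C_i)^{n_i}$ is then a finite direct sum of projective modules, hence a projective module of finite uniform dimension.

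The final step, which I expect to be the main obstacle, is to upgrade this to right self-injectivity. I would first check that $E(R_R)$ is finitely generated: being projective it admits a splitting monomorphism $\mu\colon E(R_R)\hookrightarrow R^{(\Lambda)}$, the image of the cyclic essential submodule $R_R$ lands in a finite subsum $R^{(\Lambda_0)}$, essentiality forces $\mu(E(R_R))\cap R^{(\Lambda\setminus\Lambda_0)}=0$, and projecting onto $R^{(\Lambda_0)}$ embeds $E(R_R)$ into a finitely generated free module; since this image is injective it is a direct summand there, so $E(R_R)$ is finitely generated and projective. Now, $R_R$ being extending of finite uniform dimension, $R_R=\bigoplus_j U_j$ with each $U_j$ a uniform direct summand of $R_R$ with simple socle, and each $E(U_j)$ is isomorphic to one of the $E(C_i)$'s (which are therefore finitely generated, projective, injective, with simple essential socle). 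The crux is to prove $U_j=E(U_j)$ for every $j$: granting this, $R_R$ is a finite direct sum of injective modules, hence right self-injective, so $E(R_R)=R_R$, and since $R$ is right Kasch, $R_R$ then contains a copy of $E(S)$ for every simple $S$, i.e.\ $R_R$ is a cogenerator; thus $R$ is a right self-injective right cogenerator ring, that is, right PF. To prove $U_j=E(U_j)$ I would argue by contradiction: otherwise $E(U_j)/U_j$ is a nonzero finitely generated singular module, so it has a simple quotient and the projective module $E(U_j)$ maps onto a simple module with $U_j$ in the kernel; exploiting that every cyclic submodule of $E(U_j)\subseteq E(R_R)$ embeds into $R$ by $R$-tightness, together with the structure of the $E(C_i)$ just obtained, one should contradict either the essentiality of $\Soc(R_R)$ or the count of isomorphism classes of simple modules. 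Alternatively, one can verify that the accumulated data (tightness of $R_R$, semilocality of $R$, and finite generation of the $E(C_i)$) meet condition~(4) of Theorem~\ref{two}, again concluding that $R$ is right PF.
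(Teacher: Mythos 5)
Your handling of $(1)\Rightarrow(2)\Rightarrow(1)$ and $(1)\Rightarrow(3)$ is fine and agrees with the paper. The problem is $(3)\Rightarrow(1)$, where your argument stops short exactly where the paper's proof begins. The paper simply observes that under (3) the ring $R_R$ is already a cogenerator, at which point (3) reduces to (2) and Theorem~\ref{two} finishes the job. The point you are missing is that you only extract the Kasch property from ``$E(S)$ projective,'' when in fact the whole of $E(S)$ embeds in $R$: writing $E(S)$ as a direct summand of some $R^{(\Lambda)}$, choose a coordinate projection $\pi_{\lambda}$ with $\pi_{\lambda}(S)\neq 0$; since $S$ is simple, $\pi_{\lambda}|_S$ is injective, and since $S$ is essential in $E(S)$, $\ker(\pi_{\lambda}|_{E(S)})\cap S=0$ forces $\ker(\pi_{\lambda}|_{E(S)})=0$, so $\pi_{\lambda}|_{E(S)}\colon E(S)\hookrightarrow R$. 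Hence every injective envelope of a simple embeds in $R_R$, so $R_R$ is a cogenerator (this uses only the projectivity of the $E(S)$'s, not tightness or extending), and then condition (2) of the corollary holds.

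Because you miss this, you are forced into a longer route via Theorem~\ref{one}, and that route has a genuine gap at the end. After obtaining $\Soc(R_R)$ finitely generated and essential, and decomposing $R_R=\bigoplus_j U_j$ into uniforms, the step ``prove $U_j=E(U_j)$'' is not carried out — your suggestion to ``contradict the essentiality of $\Soc(R_R)$ or the count of isomorphism classes'' is a hope, not an argument. Your fallback of invoking condition (4) of Theorem~\ref{two} also does not close the gap, because (4) requires $R_R$ to be \emph{tight} (every finitely generated submodule of $E(R_R)$ embeds in $R$), whereas the hypothesis only gives $R$-tightness (cyclic submodules), and you have not established the upgrade. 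Minor issue along the way: you assert $E'=E(eR)$ for a summand $E'$ of $E(R_R)$; what is actually available is that $E'\cap R\subseteq_e eR$ extends by injectivity of $E'$ to an essential embedding of $eR$ into $E'$, so $E'$ contains an essential copy of the cyclic projective $eR$ — this suffices for Theorem~\ref{one} but should be stated as an embedding, not an equality.
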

\begin{proof} The implications $(1)\Rightarrow (2)\Rightarrow (3)$ are clear.

$(3)\Rightarrow (1)$ Clearly $R_R$ is a cogenerator. Using the results of \cite{EE} it is possible
to show that every cyclic submodule of $E(R_R)$ essentially embeds in a projective module, for if $R_R$ is $R$-tight and extending,
then every cyclic submodule of $E(R_R)$ is essentially embeddable in a
projective module and so we may apply \cite[Theorem 3.1]{EE}. The implication now follows from Theorem~\ref{two}.
\end{proof}


Our first  example shows that we cannot drop from Theorem~\ref{two} and Corollary~\ref{three} the hypothesis that $R_R$ is a cogenerator.

\begin{example}\label{integers}
The ring of rational integers {\bf Z} is
both tight and extending but it is not self-injective. Therefore, it is not PF.
\end{example}

The next example shows that a right $R$-tight ring does not need to be right extending.
\begin{example}\label{triangular}
The ring $R$ of upper triangular matrices over a field
$F$ is right $R$-tight. Furthermore, since
$E(R_R)$ is projective, every direct summand  of $E(R_R)$ has an
essential finitely generated projective submodule. However, $R$ is
not a right extending ring, for the right ideal
$\{\begin{pmatrix}0~x\cr 0~x\end{pmatrix}|x\in F\}$ is not essential in a direct summand
of $R_R$.
\end{example}

Note that the ring  constructed in the above example is not right  tight  since otherwise it would be quasi-Frobenius as it is right artinian. Next, we give example of a ring $R$ such that every finitely generated submodule of its injective envelope $E(R_R)$ embeds in a free module but $R$ is not  right extending.

\begin{example}\label{extending}
Let $R$ be a right noetherian ring such that the injective envelope of any flat module is flat. For instance, a commutative noetherian domain (see \cite[Theorem 3]{CE}). As this property is clearly Morita invariant, any flat right module over $M_n(R)$ has a flat injective envelope for any $n\geq 1$. Let $E=E(M_n(R)_{M_n(R)})$ and let $p:M_n(R)^{(I)}\rightarrow E$ be an epimorphism. As $E$ is flat, $p$ is a pure epimorphism. Let $N$ be any finitely generated submodule of $E$. As $R$ is right noetherian, $N$ is finitely presented and thus, the inclusion $i:N\rightarrow E$ lifts to a monomorphism $v:N\rightarrow M_n(R)^{(I)}$. Therefore, $M_n(R)$ is a ring such that every finitely generated submodule of its right injective envelope embeds in a free module. However, if $R$ is a commutative noetherian domain which is not semihereditary, then $M_n(R)$ is not right (nor left) extending (see \cite[Example 2.3.13]{BPR}) .
\end{example}


Finally, we exhibit an example of a commutative ring $R$ which is tight, but  it does not have the property that every direct summand of $E(R_R)$ has
an essential finitely generated projective  submodule, nor every cyclic
submodule of $E(R_R)$ is essentially embeddable in a projective. In particular, $R$ is not extending.


\begin{example}
Let $R = \{(m,n)\in {\bf Z}\times {\bf Z}| m \equiv n (mod ~2)\}
\subseteq {\bf Z}\times {\bf Z}$. Then $R$ is a semiprime Goldie ring and,
in fact, $R$ is an order in the semisimple ring ${\bf Q}\times {\bf Q}$,
so that $E(R_R) \cong {\bf Q}\times {\bf Q}$. Using
\cite[Proposition 4.2]{JLP1} one can easily see that
as R is a semiprime (two-sided) Goldie ring, then $R_R$ is tight, cf. \cite{JL}.

On the other hand, the  principal ideal $K_R = R(2,0) = 2{\bf Z}\times 0
\subseteq R$  is an essential submodule of ${\bf Q}\times 0 = \{(q,0)|q\in
{\bf Q}\}$ and, since the latter module is divisible, it is injective and
hence $E(K_R) \cong {\bf Q}\times 0$. Assume then that  $K_R$ is essentially
embeddable
in a projective module $P_R$. Then $P_R$ embeds in ${\bf Q}\times 0$.
On the other hand, if we set $L = R(0,2) \subseteq R$, then it is easily
checked that $R/L\cong {\bf Z}$ and that $P = P/LP$ is also a projective
$R/L$-module. Thus $P$ can be viewed as a ${\bf Z}$-projective submodule
of ${\bf Q}\times 0$ and this implies that $P$ is cyclic as $R/L$-module
and hence as $R$-module. Thus there exists $0 \neq  (q,0) \in
{\bf Q}\times 0$ such that $P = R(q,0)$. The map  $K \rightarrow P$ defined
by $(2n,0) \mapsto (qn,0)$ is easily seen to be an isomorphism, and so we
must have $P_R \cong K_R$. But, since $R_R$ is indecomposable, it is
clear that $K_R$ is not projective, which gives a contradiction and shows
that $K_R$ is not  essentially embeddable in a projective module. Observe
also that, in particular, $R$ cannot be a extending ring.
\end{example}

It is well known that a  ring $R$ is right PF if and only if it is
right self-injective and has finite essential socle (i.e., $R_R$  is finitely cogenerated). The following result extends this fact.

\begin{proposition}\label{four} Let $R$ be a ring such that $R_R$ is tight
and $E(R_R)$ is both finitely generated and finitely cogenerated. Then $R$
is right PF.
\end{proposition}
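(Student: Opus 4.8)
The plan is to verify that the hypotheses of Theorem~\ref{two} are satisfied, so that $R$ is right PF. Since $R_R$ is tight by assumption, it is in particular right $R$-tight, and the only thing that really needs checking is that $R_R$ is a cogenerator, or equivalently (invoking Theorem~\ref{two}(4)) that $R$ is semilocal and the injective envelopes of all simple right $R$-modules are finitely generated. The key structural input is that $E = E(R_R)$ is finitely generated \emph{and} finitely cogenerated. Finite cogeneration of $E$ means that $\Soc(E)$ is finitely generated and essential in $E$; since $R_R$ is essential in $E$, we get $\Soc(R_R) = \Soc(E) \cap R = \Soc(E)$ is finitely generated and essential in $R_R$ as well. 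In particular $\Soc(R_R)$ is a finite direct sum of simple modules, so there are only finitely many isomorphism classes of simple right ideals of $R$, say $C_1, \dots, C_m$.

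Next I would use finite generation of $E$ together with the tightness hypothesis. Write $E = x_1 R + \cdots + x_t R$. Each cyclic submodule $x_j R$ of $E$ embeds in $R$ by $R$-tightness; more to the point, because $R_R$ is \emph{tight}, the finitely generated submodule $E = \sum_j x_j R$ itself embeds in $R^n$ for some finite $n$ — indeed one can embed $E$ into a finite direct sum of copies of $R$ since every finitely generated submodule of $E$ does so. Thus $E$ embeds in $R^n$, and since $E$ is injective this embedding splits, so $E$ is (isomorphic to) a direct summand of $R^n$; in particular $E$ is projective. Now $E$ contains the injective envelope $E(C_i)$ of each simple right ideal $C_i$ as a direct summand (because $C_i \subseteq \Soc(E) \subseteq E$ and $E$ is injective), so each $E(C_i)$ is a projective, finitely generated right $R$-module. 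A finitely generated projective module lying over an injective envelope of a simple module is local, hence is the projective cover of a simple module $D_i := E(C_i)/E(C_i)J(R)$, with $D_i \not\cong D_{i'}$ for $i \ne i'$.

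To conclude that $R$ is semilocal, the remaining point is that \emph{every} simple right $R$-module is among the $D_i$ (equivalently, every simple right module has a projective cover, whence $R$ is semiperfect by \cite[Theorem~27.6]{AF}). Here is where $R$ being right Kasch would make life easy, but it is not assumed; instead one argues as follows. Since $R_R$ is tight and $E$ is injective with $\Soc(E)$ essential, $E$ is in fact a finite direct sum of the indecomposable injectives $E(C_i)$: indeed $E$ is finitely cogenerated and injective, hence equals $E(\Soc E) = \bigoplus_{i=1}^m E(C_i)^{n_i}$ for suitable multiplicities $n_i \ge 1$. Therefore $R_R$ embeds essentially into $\bigoplus_i E(C_i)^{n_i}$, and since each $E(C_i)$ is a summand of $R$, we get that $R_R$ is itself a finite direct sum of the modules $E(C_i)$ (up to the essential-extension/projective-cover bookkeeping of the proof of Theorem~\ref{two}, $(4)\Rightarrow(1)$). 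Consequently $R_R \cong E(R_R)$ is right self-injective, $R/J(R) \cong \bigoplus_i (e_i R/e_i J)$ is semisimple so $R$ is semilocal, the injective envelopes of simple right modules are the finitely generated modules $E(C_i)$, and $R_R$ is tight; thus all the hypotheses of Theorem~\ref{two}(4) hold and $R$ is right PF.

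The main obstacle I anticipate is precisely the last step: squeezing out that $R$ is \emph{semilocal} and that $R_R$ exhausts all simple modules, without the a priori Kasch hypothesis. The clean way is to first establish $E(R_R)$ is projective (via tightness + injectivity), deduce $R_R$ is self-injective by comparing $R_R \hookrightarrow E = \bigoplus E(C_i)^{n_i}$ with the fact that each $E(C_i) \mid R_R$ and using that a self-injective module has no proper essential extensions — so the essential inclusion $R_R \subseteq E$ forces $R_R = E$. Once $R_R = E$ is projective and finitely cogenerated, semilocality and the cogenerator property follow exactly as in the $(4)\Rightarrow(1)$ half of Theorem~\ref{two}, and one may simply cite that argument.
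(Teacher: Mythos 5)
Your proposal has the right overall target---showing $R_R = E(R_R)$---but the execution contains a definitional slip and a genuine gap at the crucial step.

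First, the slip: the paper defines right tight to mean every finitely generated submodule of $E(R_R)$ embeds in $R$, a \emph{single} copy, not in $R^n$. You state that $E$ embeds in $R^n$, which is weaker. The difference matters: with the correct definition, $E$ embeds in $R$, and since $E$ is injective the embedding splits, giving $R \cong E \oplus X$. Knowing only that $E$ is a summand of $R^n$ (hence projective) does not directly give you this decomposition of $R$ itself, which is exactly the leverage needed.

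Second, the gap: your argument that ``$R_R$ is itself a finite direct sum of the modules $E(C_i)$'' and that ``the essential inclusion $R_R \subseteq E$ forces $R_R = E$'' is hand-waved and, as stated, circular---you invoke that an injective module has no proper essential extension, but you are trying to prove that $R_R$ is injective. What the paper does instead is short and clean: from $R \cong E \oplus X$, one has $\Soc(R) \cong \Soc(E) \oplus \Soc(X)$; but $\Soc(E) = \Soc(R)$ (since $R$ is essential in $E$), and $\Soc(R)$ is a finite direct sum of simples because $E$ is finitely cogenerated. Krull--Remak--Schmidt for finitely generated semisimple modules then forces $\Soc(X) = 0$, and since $\Soc(R)$ is essential in $R$, it must have essential intersection with the summand $X$, giving $X = 0$. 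Hence $R = E$ is right self-injective and finitely cogenerated, so right PF. Your detour through Theorem~\ref{two}(4) and projective covers of the $E(C_i)$ is not wrong in spirit, but it imports machinery you don't need and does not close the last step; the Krull--Schmidt argument on socles is the missing idea.
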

\begin{proof} Since $R_R$ is tight and $E(R_R)$ is finitely generated, $E(R_R)$
embeds in $R_R$ and so there exists $X \subseteq R_R$ such that
$R_R \cong E(R_R) \oplus X$. Then $Soc(R_R) \cong Soc(E(R_R)) \oplus
Soc(X) \cong Soc(R_R) \oplus Soc(X)$ (since $Soc(R_R)$ is essential in
$R_R$). Now, since $Soc(R_R)$ is finitely generated, we see by Krull-Remak-Schmidt that $Soc(X) = 0$. Since $Soc(R_R)$ is essential in $R_R$, this
implies that $X=0$ and so $R$ is right self-injective and hence right PF
by \cite[12.5.2]{Ka}.
\end{proof}




Recall that a module $M$ is said to be finite dimensional if it does not contain an infinite direct sum of nonzero submodules.

\begin{theorem}\label{five} Let $R$ be a ring such that each indecomposable
injective right $R$-module is projective and every projective right
$R$-module is $R$-tight. Then $R$ is a QF ring.
\end{theorem}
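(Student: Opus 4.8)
The strategy is to show that $R$ is right Kasch, right self-injective and has finitely generated essential right socle; this forces $R$ to be right PF, and then the additional chain conditions implicit in the hypotheses will promote this to QF. The first task is to verify that $R$ is right $R$-tight (indeed right tight). Any cyclic submodule $xR$ of $E(R_R)$ has an injective envelope $E(xR)$ which is indecomposable injective, hence projective by hypothesis; projective modules are $R$-tight by hypothesis, so $xR$ — being a cyclic, hence finitely generated, submodule of the projective module $E(xR)$ sitting inside its own injective envelope — embeds in $R$. Thus every cyclic (in fact every finitely generated) submodule of $E(R_R)$ embeds in a free module, so $R$ is generalized right tight; and since each indecomposable injective embeds in a projective, which embeds in a free module, $R_R$ itself is a direct summand of a product-free situation that will let us invoke the machinery of Section~2.

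Next I would establish that $R$ is right Kasch. Given a simple right module $T$, its injective envelope $E(T)$ is indecomposable injective, hence projective, hence (being $R$-tight and finitely generated — one must first argue $E(T)$ is cyclic, which follows because a projective indecomposable injective is a local module and local projective modules are cyclic) embeds in $R$. So $T$ embeds in $R$ and $R$ is right Kasch. Now I want to apply Theorem~\ref{one}: $R$ is right Kasch and each cyclic submodule of $E(R_R)$ embeds in a free module, so I only need that every direct summand $E'$ of $E(R_R)$ contains an essential projective submodule $P$ with $P/(P\cdot Z(R_R))$ finitely generated. Since $R_R$ is essential in $E$, $E'$ contains an essential cyclic submodule $xR$; its injective envelope inside $E'$ is an indecomposable-injective-summand $E(xR)$, which is projective; and $E(xR)$ is essential in $E'$. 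The condition on $P/(P\cdot Z(R_R))$ being finitely generated is exactly the kind of argument carried out in the proof of Corollary~\ref{EE} (writing $P$ as a summand of a free module, using that $xR$ is cyclic to push the relevant image into a finitely generated free submodule); I would reproduce that computation. Theorem~\ref{one} then gives that $R_R$ has finitely generated essential socle.

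With $R$ right Kasch, right $R$-tight, having finitely generated essential right socle, I want to run the argument of the implication $(4)\Rightarrow(1)$ in Theorem~\ref{two}, or apply Theorem~\ref{two} directly: $R$ is semilocal (finitely generated essential socle plus Kasch gives, via the indecomposable injective summands being projective covers of simples, that every simple has a projective cover, so $R$ is semiperfect by \cite[Theorem~27.6]{AF}), $R_R$ is tight, and the injective envelopes of simple modules are projective hence — being local projective — finitely generated; so condition~(4) of Theorem~\ref{two} holds and $R$ is right PF. Finally, to upgrade "right PF" to "QF" I would use that $R$ is right self-injective with $R=\bigoplus_{k=1}^n e_kR$ where each $e_kR=E(C_k)$ is indecomposable injective and projective; I then need a descending/ascending chain condition. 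The cleanest route: a right PF ring is QF iff it is right (equivalently left) noetherian, or iff $R_R$ is finite-dimensional with the ACC on annihilators; here every projective right module is $R$-tight, which is a strong finiteness restriction, and I would argue it forces $R$ to be right noetherian — for instance, an infinite independent family of right ideals would let one build a projective (free) module failing $R$-tightness, or one invokes that a right self-injective right noetherian ring is QF.

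The main obstacle I anticipate is precisely this last step — bootstrapping from right PF to QF using only "every projective right module is $R$-tight." Right PF already gives right self-injective, semiperfect, finitely cogenerated, so what is missing is a chain condition, and the $R$-tightness of all projectives is the only remaining hypothesis that can supply it. The delicate point is that $R$-tightness a priori only constrains submodules of $E(R_R)$, so to leverage it for a free module $R^{(I)}$ with $I$ infinite one must relate $E(R^{(I)})$ back to $E(R_R)$ — and over a right self-injective ring $R^{(I)}$ need not be injective, so $E(R^{(I)})$ is genuinely larger. I expect the argument to go: if $\Soc(R_R)$ (already finitely generated and essential) meets every nonzero submodule, then an infinite independent family in any projective module would project onto an infinite independent family of simples, contradicting finiteness of $\Soc$; hence every projective right module is finite-dimensional, and a right self-injective ring all of whose projectives are finite-dimensional, being also semiperfect with essential finitely generated socle, is right artinian, hence QF. Pinning down that implication rigorously — rather than hand-waving "strong finiteness" — is where the real work lies.
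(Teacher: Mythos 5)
Your proposal takes a genuinely different route from the paper's and, in its present form, has two real gaps. First, you repeatedly assert that the injective hull $E(xR)$ of a cyclic submodule of $E(R_R)$ is an \emph{indecomposable} injective (in the tightness argument, in the verification of the hypotheses of Theorem~\ref{one}, and in extracting essential projective summands); this fails because a cyclic module need not be uniform. The paper avoids this by first invoking \cite[Theorem~2.6]{JL}: every direct sum of indecomposable injectives is projective by hypothesis, hence $R$-tight by hypothesis, and this forces $R$ to be right q.f.d. Only then does every finitely generated module $X$ have finite uniform dimension, so that $E(X)$ is a \emph{finite} direct sum of indecomposable injectives and therefore projective. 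Without that q.f.d. step your application of Theorem~\ref{one} is not justified.

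The second and more serious gap is the final bootstrap from right PF to QF. Your suggested argument --- that an infinite independent family in a projective module would project onto an infinite independent family of simples, contradicting finiteness of $\Soc(R_R)$ --- confuses $\Soc(R_R)$ with the socle of the projective: $\Soc(R^{(\mathbb N)}) \cong \Soc(R_R)^{(\mathbb N)}$ is never finitely generated, and indeed $R^{(\mathbb N)}$ has infinite uniform dimension over any nonzero ring, so ``every projective is finite-dimensional'' is false outright. Moreover, even granting some chain condition, ``right self-injective $+$ semiperfect $+$ finitely generated essential socle $\Rightarrow$ right artinian'' is not a theorem: rings with those three properties are precisely the right PF rings, and right PF does not imply QF. The paper sidesteps the PF detour entirely: once $R$ is known to be right q.f.d., every finitely generated $X$ essentially embeds in a finitely generated projective module (each $E(U_i)$ being projective, local, hence cyclic), and Corollary~\ref{EE} then delivers QF directly, with no use of Theorem~\ref{one}, Theorem~\ref{two}, or the intermediate PF characterization.
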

\begin{proof} By \cite[Theorem 2.6]{JL}, since each direct sum of indecomposable
injective modules is projective and hence $R$-tight, $R$ has the property
that each finitely generated right $R$-module is finite dimensional. Then,
if $X$ is a finitely generated right $R$-module, $X$ contains an essential
submodule of the form $\oplus_{i=1}^{n} U_i$, where the $U_i$'s are uniform
modules. Thus $E(C) \cong \oplus_{i=1}^{n} E(U_i)$ is a finite direct sum
of indecomposable injective modules and thus, projective. Each $E(C_i)$
has a local endomorphism ring and hence it is a projective local module and,
in particular, cyclic. Thus, we see that $X$ is essentially embeddable in a
finitely generated projective module and by Corollary~\ref{EE}, $R$ is QF.
\end{proof}

\begin{remark} Observe that the hypothesis of the above theorem is weaker than in \cite[Theorem 5.1]{JLOS}, since the assumption that
$R$ is semiperfect (together with the other things in hypothesis) implies that every indecomposable
injective is projective.
\end{remark}






\noindent Our next proposition is a simple yet useful observation.

\begin{proposition}\label{seven} Any left perfect and right tight ring  is right self-injective.
\end{proposition}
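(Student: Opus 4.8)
The plan is to exploit the fact that over a left perfect ring, the injective envelope $E(R_R)$ is a (projective-free) module that is controlled by its finitely generated submodules, and combine this with the tightness hypothesis to force $E(R_R) = R_R$. First I would set $E = E(R_R)$ and note that since $R$ is left perfect, it is semiperfect, so $R_R = \bigoplus_{i=1}^{n} e_iR$ with each $e_iR$ a local module having local endomorphism ring; moreover left perfect rings satisfy the descending chain condition on principal right ideals, and in particular $R_R$ is finite-dimensional (semiperfect plus left perfect gives that $R/J$ is semisimple and idempotents lift), so $E(R_R) = \bigoplus_{i=1}^{m} E(U_i)$ is a finite direct sum of indecomposable injectives. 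The key point I want is that each indecomposable summand $E(U_i)$ is then a finitely generated module: indeed $U_i$ is a uniform submodule of $e_jR$ for some $j$, hence cyclic, and I will argue that over a left perfect ring the injective envelope of a cyclic (indeed finitely generated) module is finitely generated — this is where I would invoke that $R$ left perfect means every right $R$-module has a small radical and, more to the point, $_RR$ is semiperfect so that one can use the Bass characterization to see that finitely generated modules do not "grow" under taking injective hulls in the relevant sense. (If a direct argument is awkward, the cleaner route is: $E$ is finite-dimensional, so it suffices to bound each indecomposable piece.)

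Once $E(R_R)$ is finitely generated, the tightness hypothesis applies directly: $R_R$ being right tight means every finitely generated submodule of $E(R_R)$ embeds in $R$, and in particular $E(R_R)$ itself — being finitely generated — embeds in $R_R$. So there is a monomorphism $E(R_R) \hookrightarrow R_R$. But $R_R$ is an essential submodule of $E(R_R)$, and composing the inclusion $R_R \hookrightarrow E(R_R)$ with this monomorphism $E(R_R) \hookrightarrow R_R$ exhibits $R_R$ as embedded in itself with $E(R_R)/R_R$-worth of "room"; more precisely, $R_R$ is a direct summand of $E(R_R)$ (being injective-containing... no — rather) the embedding $E(R_R) \hookrightarrow R_R$ splits because $E(R_R)$ is injective, giving $R_R \cong E(R_R) \oplus X$ for some $X$. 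Since $R_R$ is essential in $E(R_R)$ and $E(R_R)$ is a direct summand of $R_R$ containing $R_R$ essentially, we get $X = 0$: indeed $X \cap E(R_R) = 0$ inside $R_R \cong E(R_R)\oplus X$, but $E(R_R)$ is essential in $R_R$ (as it contains the essential submodule $R_R$ — wait, $R_R$ essential in $E$ and $E \subseteq R$ as a summand forces $E$ essential in $R$), so $X = 0$ and hence $R_R = E(R_R)$ is injective, i.e. $R$ is right self-injective. This is exactly the argument pattern already used in Proposition~\ref{four}, which I would cite.

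The main obstacle, and the step that needs the most care, is establishing that $E(R_R)$ is finitely generated from the left perfect hypothesis alone — the rest is a short splitting argument. One must be careful that "left perfect" (not right perfect) is what is assumed: left perfect gives that $_RR$ is semiperfect and that every left module has a projective cover, equivalently $R/J$ is semisimple and idempotents lift modulo $J$ and $J$ is left $T$-nilpotent. The cleanest path I foresee is: (i) left perfect $\Rightarrow$ semiperfect $\Rightarrow$ $R_R$ is a finite direct sum of local modules, so $R_R$ is finite-dimensional; (ii) therefore $E(R_R)$ is a finite direct sum of indecomposable injectives $E(V_i)$ with $V_i$ uniform and cyclic; (iii) each $\End(E(V_i))$ is local, so $E(V_i)$ is indecomposable, and one shows $E(V_i)$ is cyclic by noting that the right $R$-tight hypothesis already embeds the finitely generated module $V_i' := $ (any finitely generated submodule) into $R$ — in fact I can bypass (iii) entirely: for the splitting argument I only need that each finitely generated submodule of $E$ embeds in $R$, and I want to conclude $E$ itself embeds in $R$, so the real content is just that $E(R_R)$ is finitely generated, which follows once I know $E$ is finite-dimensional and that the hull of each cyclic uniform module is finitely generated. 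For the latter I would use that over a left perfect ring, $R_R$ has finite length... no, that need not hold; so instead I invoke: an indecomposable injective $E(V)$ with $V$ cyclic over a semiperfect ring need not be finitely generated in general, so the honest fix is to also use tightness at this stage — but $R$-tightness only embeds cyclics. Here is the resolution I will adopt: apply right $R$-tightness to the cyclic module $V_i \subseteq E(R_R)$ to embed it in $R_R$, replace $V_i$ by its essential closure inside the copy of $R_R$, which is a uniform submodule of $R_R$ hence contained in some local summand $e_jR$; then $E(V_i) = E(e_jR)$ is a summand of $E(R_R)$, and iterating over a transversal of the uniform summands shows $E(R_R)$ is a finite direct sum of hulls of finitely generated modules. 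At that point finitely-generatedness of $E(R_R)$ follows from left $T$-nilpotence of $J$ forcing the radical of $E(R_R)$ to be small, so $E(R_R)$ is finitely generated iff $E(R_R)/E(R_R)J$ is, and the latter is semisimple of finite length since $E(R_R)$ is finite-dimensional. I would then finish with the splitting argument above, citing Proposition~\ref{four}.
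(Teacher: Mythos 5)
There is a genuine gap. Your proof hinges on first establishing that $E(R_R)$ is finitely generated, and that step is not justified. You argue that $E$ has finite uniform dimension (true, since $R$ is semiperfect), that $\Rad(E)=EJ$ is small in $E$ (true, since $R$ is left perfect), and then conclude that $E$ is finitely generated because $E/EJ$ ``is semisimple of finite length since $E(R_R)$ is finite-dimensional.'' But finite uniform dimension does \emph{not} imply that $E/\Rad(E)$ has finite length: a uniform module can easily have an infinite-dimensional semisimple top. In fact, over a left perfect ring the injective hull of a cyclic (even simple) module need not be finitely generated, so there is no hope of proving $E(R_R)$ finitely generated before already knowing $R$ is self-injective — the argument is essentially circular. (There is also a secondary error in your splitting step: the monomorphism $\beta\colon E(R_R)\hookrightarrow R_R$ supplied by tightness is not required to extend the inclusion $R_R\hookrightarrow E(R_R)$, so the claim that $\beta(E(R_R))$ ``contains $R_R$ essentially'' is unwarranted. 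That part is repairable — compose $\beta$ with the inclusion to get a non-surjective monomorphism $R_R\to R_R$ — but the missing finite generation of $E$ is not.)

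The paper avoids this entirely with a much lighter argument: assume $R\neq E(R_R)$ and pick a single $x\in E(R_R)\setminus R$. Then $R+xR$ is a finitely generated submodule of $E(R_R)$ that properly contains $R_R$. By right tightness it embeds in $R_R$, so composing $R_R\hookrightarrow R+xR\hookrightarrow R_R$ gives a monomorphism of $R_R$ into itself that is not onto; writing it as left multiplication by some $r\in R$, the chain $rR\supsetneq r^2R\supsetneq\cdots$ is strictly descending, contradicting the DCC on principal right ideals that characterizes left perfect rings (Bass). The point you missed is that one never needs to embed all of $E(R_R)$ into $R$ — a finitely generated overmodule of $R_R$ strictly larger than $R_R$ already forces the contradiction. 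If you want to salvage your outline, replace ``show $E(R_R)$ is finitely generated and embed it'' with ``embed $R+xR$ for a single $x\notin R$,'' and replace the essentiality/summand argument with the principal-ideal DCC argument.
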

\begin{proof} Assume that, on the contrary, $R \neq E(R_R)$, so that there exists
$x \in E(R_R)$ such that $x \notin R$. Then $R_R+xR$ is a finitely generated
submodule of $E(R_R)$ and, as $R$ is right tight, there is an embedding
$R_R+xR \subseteq R_R$. But, since $R_R$ embeds, as a proper submodule,
in $R_R+xR$, we get a proper embedding of $R_R$ into itself, which gives
an infinite descending chain of principal right ideals of $R$, a contradiction.
\end{proof}

\begin{corollary}\label{eight} If $R$ is right artinian and right tight,
then $R$ is QF.
\end{corollary}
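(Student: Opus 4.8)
*If $R$ is right artinian and right tight, then $R$ is QF.*

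The plan is to deduce this directly from Proposition~\ref{seven} together with the classical characterization of QF rings. First I would observe that a right artinian ring is in particular right perfect, hence left perfect (for one-sided artinian rings, right artinian implies semiprimary, and a semiprimary ring is both left and right perfect). Once $R$ is known to be left perfect and right tight, Proposition~\ref{seven} applies verbatim and yields that $R_R$ is self-injective. So the core of the argument is the chain: right artinian $\Rightarrow$ left perfect, then invoke Proposition~\ref{seven}.

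Having established that $R$ is right self-injective and right artinian, I would finish by recalling that a right self-injective right (or left) artinian ring is quasi-Frobenius — indeed, a right self-injective right artinian ring is QF, which is one of the standard equivalent definitions of a QF ring (see e.g.\ \cite{NY} or \cite{Lam}). Alternatively, one can note that right artinian implies $R_R$ has finitely generated essential socle, and a right self-injective ring with finitely generated essential right socle is right PF; being moreover right artinian it is QF by definition. Either route closes the argument immediately.

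The only point requiring any care — and it is genuinely routine rather than an obstacle — is the implication "right artinian $\Rightarrow$ left perfect"; this is standard (a right artinian ring is semiprimary, i.e.\ $J(R)$ is nilpotent and $R/J(R)$ is semisimple artinian, and semiprimary rings are two-sided perfect). There is no transfinite counting or delicate module-theoretic construction needed here: all the real work has already been done in Proposition~\ref{seven}, and this corollary is simply the specialization of that proposition to the artinian case combined with a well-known classical fact. So I do not anticipate any serious difficulty; the proof is essentially two lines once Proposition~\ref{seven} is in hand.
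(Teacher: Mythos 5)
Your argument is exactly the one the paper intends (no proof is written out for this corollary precisely because it follows immediately from Proposition~\ref{seven}): right artinian implies semiprimary, hence left perfect, so Proposition~\ref{seven} gives right self-injectivity, and a right self-injective right artinian ring is QF. The reasoning and the route are the same as the paper's.
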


\begin{remark}
Note that, however, a right noetherian, right extending and right tight ring does not need to be QF, even in the commutative case, as the example $R = {\bf Z}$ shows.
\end{remark}

\section{An extension of the FGF conjecture.}

\noindent We have studied in the above section different new characterizations of PF rings in terms of ($R$-)tight conditions and we have outlined the strong relation existing between these characterizations and both CF and FGF conjectures. Our purpose in this section is to establish a general problem such that all the existing partial results on CF and FGF conjectures can be included as partial positive answers of this new problem. In order to do so, we are going to define that a ring $R$ is generalized right ($R$-)tight if every finitely generated (resp., cyclic) submodule of $E(R_R)$ embeds in a free module. The results in the above section naturally suggest to propose the following conjecture:

\begin{conjecture} \label{conj1}
If $R$ is a right Kasch right generalized ($R$-)tight ring, then $R_R$ has finitely generated essential socle.
\end{conjecture}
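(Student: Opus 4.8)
The natural attack is to push the transfinite counting machinery behind Theorem~\ref{one} and Theorem~\ref{nil} through under the present weaker hypotheses --- right Kasch in place of right cogenerator, and ``every finitely generated (resp.\ cyclic) submodule of $E(R_R)$ embeds in a free module'' in place of ``embeds in $R_R$''. Write $E=E(R_R)$ with inclusion $u\colon R_R\to E$, set $S=\End_R(E)$, $J=J(S)$, and recall from \cite[Lemma~1]{GT} the ring map $\Phi\colon R\to S/J$ with kernel $Z(R_R)$, inducing the embedding $\Psi\colon R/Z(R_R)\hookrightarrow S/J$. Right Kaschness gives, as in Theorem~\ref{one}, that $|\Omega(R)|\le |C(R)|$, so the goal splits into the three steps of Theorem~\ref{nil}: (1) $\Soc(R_R)$ has finitely many homogeneous components; (2) each is finitely generated; (3) $\Soc(R_R)$ is essential. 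As in Theorem~\ref{nil}, (2) and (3) are the argument of (1) with the almost-disjoint family replaced by singletons and the ideal $\mathcal N$ replaced by $\Soc(S/J_{S/J})$, resp.\ by $J$; so the content is Step~(1).

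For Step~(1) I would argue by contradiction. If $\Soc(R_R)$ has an infinite set $I$ of homogeneous components, Tarski's Lemma \cite{Ta} supplies an almost-disjoint family $\mathcal K$ of infinite subsets of $I$ with $|\mathcal K|\gneqq |I|$ and $\bigcup\mathcal K=I$; via \cite[Lemmas~3, 4 and 7]{GT} one forms the central idempotents $e_A+J$ of $S/J$ attached to the injective hulls of the partial socles indexed by $A\subseteq I$, and a quotient $S/\mathcal N$ in which $\{e_K+\mathcal N\mid K\in\mathcal K\}$ is an orthogonal family of nonzero central idempotents; set $\mathcal M/Z(R_R)=\Psi^{-1}(\mathcal N/J)$. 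Put $x_K=e_Ku(1)\in E$, so $e_Ku$ factors as $R\twoheadrightarrow x_KR\hookrightarrow E$. Since $x_KR$ is cyclic, its image under the embedding into a free module given by hypothesis lies in a finitely generated free submodule, yielding a monomorphism $\alpha_K\colon x_KR\hookrightarrow R^{n_K}$. From $\alpha_K$ one wants, as in Theorem~\ref{nil}, to produce an element $r_K\in R$ with $r_K+\mathcal M\notin J(R/\mathcal M)$, choose a maximal right ideal $L_K\supseteq\mathcal M$ with $(R/L_K)\cdot(r_K+\mathcal M)\ne 0$, and then check that the simple modules $\{R/L_K\}_{K\in\mathcal K}$ are pairwise non-isomorphic; since $|\mathcal K|\gneqq|I|=|\Omega(R)|$, this is the contradiction. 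The non-isomorphism check would go as in Theorem~\ref{nil}, using the injectivity of $\Psi$, the centrality of the idempotents in $S/\mathcal N$, and $e_Ke_{K'}\in\mathcal N$ for $K\ne K'$.

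The hard part is producing $r_K$ and controlling its class modulo $\mathcal M$, and this is exactly where the two weakenings bite. In Theorem~\ref{nil} the embedding $x_KR\hookrightarrow R$ is extended, by injectivity of $E$, to an endomorphism $s_K\in S$ with $u\alpha_K=s_Ku_K$, and a left inverse $h_K\in S$ of $s_K$ on $E(x_KR)$ is produced; the argument then takes place inside $S/J$ through $\Psi$, and the crucial implication ``$r_K+\mathcal M\in J(R/\mathcal M)\Rightarrow e_K\in\mathcal N$'' uses both that $s_K,h_K\in S=\End_R(E)$ and that $J(R/Z(R_R))$ is completely nil (to pass from ``radical'' to ``nilpotent'' and run the idempotent chase). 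When $x_KR$ only embeds in a free module $R^{n_K}$, composing with $R^{n_K}\hookrightarrow E(R^{n_K})\cong E^{n_K}$ and extending gives merely a map $E\to E^{n_K}$, with no canonical collapse back to $\End_R(E)$ respecting the idempotent bookkeeping; and without the completely-nil condition the nilpotency shortcut is gone as well. Both obstructions are precisely what the extra assumptions of Corollary~\ref{ai} and Corollary~\ref{three}, and more generally the hypotheses under which the CF and FGF conjectures are known (semilocal with finitely generated injective hulls of simples, right extending, right automorphism-invariant, each indecomposable injective projective, \dots), are tailored to remove --- which is why the assertion is stated as a conjecture and established below only under such additional hypotheses.
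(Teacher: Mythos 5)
You correctly recognize that this statement is a \emph{conjecture}, not a theorem --- the paper does not prove it, and your analysis of why the natural attack runs into trouble is essentially the same as the paper's implicit view: the Osofsky/G\'omez Torrecillas transfinite counting machinery goes through under extra hypotheses (Theorem~\ref{one}, Theorem~\ref{nil}, Theorem~\ref{ess-socle}) but not in general. So on the top-line verdict you and the paper agree.

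One inaccuracy worth flagging in your diagnosis of the obstruction, though. You say that when $x_KR$ only embeds into $R^{n_K}$, extending to $E\to E^{n_K}$ leaves ``no canonical collapse back to $\End_R(E)$ respecting the idempotent bookkeeping.'' The paper's Theorem~\ref{ess-socle} shows this is not quite where the problem lies: one \emph{does} collapse, by composing with the coordinate projections $\pi_{K,1,t}\colon E^{n_{K,1}}\to E$ and injections $v_{K,1,t}\colon E\to E^{n_{K,1}}$, and observing that since $e_K+J\notin\mathcal N/J$ at least one summand $h_{K,1}v_{K,1,t_0}\pi_{K,1,t_0}s_{K,1}e_K$ stays outside $\mathcal N/J$; this produces genuine elements $s'_{K,1},h'_{K,1}\in S$ and hence an $r_{K,1}\in R$. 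The true cost of the free-module weakening is that the resulting $s'_Kh'_Ke_K$ need not be an ``identity-modulo-$J$'' on $E(x_KR)$ in the way $h_Ks_Ke_K=e_K$ was, so one is forced to iterate the construction, building sequences $s'_{K,l},h'_{K,l},r_{K,l}$. The nilpotency shortcut (``some power of a single $r_K$ vanishes'') is then replaced by the need for the descending \emph{chain} $r_{K,l_0+1}\cdots r_{K,1}$ to vanish --- which is exactly right T-nilpotence, and is why the paper upgrades the hypothesis from completely nil (Theorem~\ref{nil}, where $R$-tightness gives an $\End_R(E)$-endomorphism directly) to completely right T-nilpotent (Theorem~\ref{ess-socle}). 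In short: the collapse is not the obstruction; the loss of a one-step left-inverse is, and T-nilpotence is the precise patch. Your overall conclusion --- that without some such hypothesis the argument does not close --- is nonetheless correct, and is why the statement is left as a conjecture.
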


We would like to remark that Conjecture~\ref{conj2} is a consequence of this other conjecture, since if Conjecture~\ref{conj1} is true, then every right cogenerator right $R$-tight ring has finitely generated essential right socle. One can then use the same arguments as in Theorem~\ref{two} to show that $R$ is right PF. On the other hand, Theorem~\ref{one} shows that Conjecture~\ref{conj1} is true if we assume that every cyclic submodule of $E(R_R)$ essentially embeds in a projective module or that $R$ is right extending. Moreover, Osofsky's pioneering characterization of right PF rings \cite[Theorem 1]{O} is a positive solution to the conjecture when $R$ is right self injective. Indeed, it is not difficult to check that most additional conditions which are known to force a right CF ring to be right artinian, also force the above conjecture to be true. We are going to show examples of them.

\begin{proposition}
Let $R$ be a right Kasch right generalized $R$-tight ring. If $R_R$ is noetherian, then $\Soc(R_R)$ is finitely generated and essential in $R$.
\end{proposition}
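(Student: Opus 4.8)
The goal is to deduce, for a right Kasch right generalized $R$-tight ring $R$ with $R_R$ noetherian, that $\Soc(R_R)$ is finitely generated and essential. The natural route is to reduce to Theorem~\ref{one}, whose hypotheses are (a) $R$ right Kasch, (b) each cyclic submodule of $E(R_R)$ embeds in a free module, and (c) every direct summand of $E(R_R)$ contains an essential projective submodule $P$ with $P/(P\cdot Z(R_R))$ finitely generated. Hypothesis (a) is assumed, and (b) is immediate from the definition of generalized $R$-tight. So the entire content is verifying (c).

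\textbf{Key steps.} First I would fix a nonzero direct summand $E'$ of $E=E(R_R)$. Since $R_R$ is essential in $E$, $E'$ contains an essential cyclic submodule $xR$ (take any nonzero $x\in E'\cap R$, or more carefully use that $E'\cap R$ is a nonzero right ideal and, $R$ being noetherian hence in particular so that cyclic submodules suffice, pick $x$ generating an essential submodule of $E'$ — actually since $R_R$ noetherian, $E'\cap R$ is finitely generated, but one still needs a single cyclic essential submodule; here the key point is that $E'$, being uniform-dimension finite in the noetherian case? no — better: $E'\cap R$ is essential in $E'$ and finitely generated, so pick a finite generating set and, by noetherianity, one can still only guarantee a finitely generated essential submodule, so replace "cyclic" by "finitely generated" throughout). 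Since $R$ is generalized $R$-tight we only get embeddings of \emph{cyclic} submodules into free modules, so I would instead choose $xR$ essential cyclic in $E'$: this is possible because any essential right ideal of a right noetherian ring? — no, that is not automatic either. The cleanest fix is: $E'\cap R$ is a nonzero right ideal; over a right noetherian ring a uniform-dimensional argument is not needed — simply note $E'$ contains $E(yR)$ for suitable $y$ and work with the cyclic essential submodule of each indecomposable summand after decomposing $E'$ using that $R_R$ noetherian forces $E$ to be a finite direct sum of indecomposable injectives. This is the real first step: \emph{$R_R$ noetherian $\Rightarrow$ $E(R_R)$ has finite uniform dimension $\Rightarrow$ $E$ is a finite direct sum of indecomposable injective modules}, so every direct summand $E'$ is a finite direct sum of indecomposable injectives, each of which is the injective envelope of a cyclic (indeed uniform) submodule.

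\textbf{Finishing.} Once $E'=\bigoplus_{j=1}^{t}E(x_jR)$ with each $x_jR$ cyclic, generalized $R$-tightness gives monomorphisms $x_jR\hookrightarrow F_j$ with $F_j$ free; since $x_jR$ is finitely generated the image lies in a finitely generated free summand, so we may take $F_j$ finitely generated free and then, splitting off a projective, get an embedding $x_jR\hookrightarrow P_j$ with $P_j$ finitely generated projective and $x_jR$ essential in a summand of $P_j$ — actually the embedding $x_jR\to P_j$ need not be essential, so I would follow the argument of Corollary~\ref{EE}: let $u_j:x_jR\hookrightarrow F_j=R^{(n_j)}$, let $p:F_j\to P_j$ be a projection onto a projective direct summand, and control $P_j/(P_j\cdot Z(R_R))$ by the same computation: $x_jR$ essential in $E(x_jR)$, the difference of two liftings has image in the singular submodule, hence $P_j/(P_j Z(R_R))$ is a homomorphic image of a finitely generated free module and so finitely generated. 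Taking $P=\bigoplus_j P_j$ (or rather the image of $\bigoplus_j x_jR$ inside $E'$ extended via injectivity) yields the essential projective submodule required in (c). Then Theorem~\ref{one} applies and gives $\Soc(R_R)$ finitely generated and essential.

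\textbf{Main obstacle.} The delicate point is not Theorem~\ref{one} itself but the passage from ``generalized $R$-tight'' (which only embeds \emph{cyclic} submodules in free modules) to a statement about arbitrary direct summands $E'$ of $E$; this forces us to use right noetherianity precisely to decompose $E$ into finitely many indecomposable injectives so that each summand is controlled by a single cyclic module. The second subtlety is reproducing the ``$P/(P\cdot Z(R_R))$ finitely generated'' bookkeeping from the proof of Corollary~\ref{EE} in this setting, where the embedding into the free module is not assumed essential; one must instead exploit that $x_jR$ is essential in the injective summand $E(x_jR)$ to show that two liftings of the inclusion differ by a map with singular image.
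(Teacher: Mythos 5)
The paper's actual proof is completely different and far shorter: it observes that the hypotheses force every right ideal of $R$ to be a right annihilator, and then invokes Johns' Lemma (cf.\ \cite[3.5B]{F1}, \cite[Theorem 8.9]{NY}) together with right noetherianity to conclude that $\Soc(R_R)$ is finitely generated and essential. No appeal to Theorem~\ref{one} is made at all, and the delicate bookkeeping about direct summands of $E(R_R)$ and essential projective submodules simply does not arise.

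Your proposed route through Theorem~\ref{one} has a genuine gap in the verification of hypothesis (c). From generalized $R$-tightness you only obtain a monomorphism $\alpha_j: x_jR \hookrightarrow R^{(n_j)}$ that is \emph{not} assumed essential, and nothing in the noetherian hypothesis makes it essential. The mechanism that makes Corollary~\ref{EE} work is precisely that there the embedding $u: xR \to P$ is given as essential: one can then extend the essential inclusion $xR \hookrightarrow E'$ through $P$, and essentiality of $u$ forces the resulting map $P \to E'$ to be a monomorphism, placing a projective essentially inside $E'$. In your setting, extending the inclusion $x_jR \hookrightarrow E(x_jR)$ to a map $P_j \to E(x_jR)$ by injectivity gives a map whose kernel meets $x_jR$ trivially, but since $x_jR$ need not be essential in $P_j$ this kernel can be nonzero, so $P_j$ need not embed into $E(x_jR)$ at all. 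Taking instead the closure of $\alpha_j(x_jR)$ inside the free module produces an essential extension, but one with no reason to be projective or a direct summand (that would be an extending-type hypothesis you do not have). The subsequent computation bounding $P_j/(P_j\cdot Z(R_R))$ is a secondary concern; the primary obstruction is that you never actually produce a projective essential submodule of $E'$.

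In short, the two approaches diverge entirely: the paper sidesteps Theorem~\ref{one} by passing to the annihilator-ring/Johns-ring theory that is available precisely because $R$ is noetherian, while your attempt to reuse the machinery of Theorem~\ref{one} and Corollary~\ref{EE} fails because ``generalized $R$-tight'' provides only non-essential embeddings into free modules, which is not enough to populate direct summands of $E(R_R)$ with essential projective submodules.
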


\begin{proof}
Assume that $R$ is a right generalized right $R$-tight ring. Then every right ideal is a right annihilator. The result now follows from \cite[3.5B Johns' Lemma]{F1} (see also \cite[Theorem~8.9]{NY}).
\end{proof}

\begin{proposition}
Let $R$ be a right Kasch right generalized tight ring. If $R$ is also left Kasch, then $\Soc(R_R)$ is finitely generated and essential in $R$. In particular, any commutative Kasch generalized tight ring has finitely generated essential socle.
\end{proposition}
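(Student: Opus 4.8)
The plan is to reduce the statement to Theorem~\ref{one} by verifying its geometric hypothesis: that every direct summand of $E(R_R)$ contains an essential projective submodule $P$ with $P/(P\cdot Z(R_R))$ finitely generated. Since $R$ is right Kasch by assumption, Theorem~\ref{one} will then deliver that $\Soc(R_R)$ is finitely generated and essential, which is exactly what we want. So the whole argument is: (i) produce, inside an arbitrary nonzero direct summand $E'$ of $E(R_R)$, a finitely generated submodule whose injective hull is all of $E'$; (ii) embed that finitely generated submodule into a free module using generalized tightness; (iii) push the free module back into $E'$ by injectivity, locate a projective summand of it that is essential in $E'$, and control the quotient by $P\cdot Z(R_R)$ exactly as in the proof of Corollary~\ref{EE}.

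Here is how I would carry out the steps. Let $E'$ be a nonzero direct summand of $E(R_R)$. Because $R_R$ is essential in $E(R_R)$, the module $E'\cap R_R$ is a nonzero right ideal, hence (using left Kasch, or simply essentiality) contains a nonzero cyclic submodule; more to the point, I want a \emph{finitely generated} $N\subseteq E'$ with $E(N)=E'$. If $E'$ is indecomposable this is immediate since any nonzero cyclic submodule of $E'$ is essential in it. For the general step I would instead argue that left Kasch forces $\Soc(R_R)$ to be essential — here is where left Kasch does the real work: a right annihilator being nonzero on the left means every maximal right ideal contains the left annihilator of a simple left module, and a standard Kasch-duality computation (as in \cite[Theorem 8.9]{NY}) shows $R_R$ is a cogenerator-like situation where $\Soc(R_R)\leq^{ess} R_R$, hence $\Soc(E')\neq 0$ and $E'$ contains a finite direct sum of simple modules; but to get essentiality of that finitely generated socle inside $E'$ one must already know $E'$ is finite-dimensional. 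The cleaner route, which I expect to be the one taken, is: by generalized tightness every cyclic submodule $xR\subseteq E(R_R)$ embeds in a free module, hence in particular embeds in $R^{(I)}$ for a finite $I$ after the usual reduction (cyclic image lands in finitely many coordinates), so $xR$ embeds in $R^n$; left Kasch then makes $R$ right finite-dimensional via Osofsky-type counting, so every finitely generated submodule of $E(R_R)$ is finite-dimensional and hence $E'$ has an essential finitely generated submodule $N$.

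Once $N\subseteq E'$ is finitely generated and essential, generalized tightness gives a monomorphism $u:N\to R^{(I)}$; as in Corollary~\ref{EE} the image lands in $R^{(I')}$ for a finite subset $I'\subseteq I$, and composing with a projection I get a homomorphism $N\to R^{I'}$; extending $u$ by injectivity to $v:R^{(I)}\to E'$ and restricting, I obtain a projective submodule sitting essentially over $N$, so essentially in $E'$; calling it $P$, the argument of Corollary~\ref{EE} verbatim — writing $P$ as a summand of a free module, using that a cyclic essential submodule of $P$ lands in finitely many free coordinates, and that the difference of the identity of $P$ with the truncated map has image inside $P\cdot Z(R_R)$ because $N$ is essential in $P$ — shows $P/(P\cdot Z(R_R))$ is a homomorphic image of $R^{I'}$, hence finitely generated. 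Then Theorem~\ref{one} applies and $\Soc(R_R)$ is finitely generated and essential; the commutative case follows since a commutative Kasch ring is (left and right) Kasch and ``tight'' is two-sided. The main obstacle is step (i): getting a \emph{finitely generated} essential submodule of an arbitrary summand $E'$, i.e. finite-dimensionality of $R_R$, which is precisely where the left Kasch hypothesis (as opposed to just right Kasch) has to be invoked, presumably through the bi-Kasch double-annihilator symmetry together with the already-available generalized tightness.
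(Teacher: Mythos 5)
Your proposal takes a route that the paper does not take, and both of the places where you flag uncertainty are in fact genuine gaps that the proposal does not close. The paper's actual proof is a short reduction to Kato's theory of S-rings: it observes that Kasch plus generalized tight yields that every finitely generated right $R$-module embeds in a free module, then notes that the two-sided Kasch hypothesis makes $R$ an S-ring in Kato's sense, and cites \cite[Theorem~1]{Kat} to conclude that $\Soc(R_R)$ is finitely generated and essential. It never passes through Theorem~\ref{one} at all.

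The first gap in your argument is step (i): to exhibit a finitely generated essential submodule $N$ of an arbitrary direct summand $E'$ of $E(R_R)$ you need $E'$, hence $R_R$, to be finite-dimensional, but finite-dimensionality (equivalently, a finitely generated essential socle) is precisely what the Proposition is trying to prove. Invoking it at this stage is circular, and the hand-wave ``left Kasch plus generalized tightness forces finite-dimensionality via Osofsky-type counting'' is exactly the nontrivial content that would need a proof of its own (that is what Kato's theorem supplies). The second gap is in step (iii): generalized tightness provides an embedding $u\colon N\to R^{(I')}$, but this embedding is not essential. Consequently, when you extend the inclusion $N\hookrightarrow E'$ along $u$ by injectivity of $E'$ to some $v\colon R^{(I')}\to E'$, the map $v$ need not be injective, and $v(R^{(I')})$ need not be projective; so you have not actually produced an essential projective submodule $P\subseteq E'$. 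In Corollary~\ref{EE} this step works only because there the hypothesis supplies an \emph{essential} embedding into a projective, which forces the extension to be a monomorphism. Your own ``tight'' hypothesis does not supply essentiality, so the argument breaks down precisely where it needed it. The paper sidesteps both obstacles by not trying to verify the hypotheses of Theorem~\ref{one} and instead deriving the conclusion directly from the two-sided Kasch hypothesis via \cite[Theorem~1]{Kat}.
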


\begin{proof}
Note that, as $R_R$ is Kasch and generalized tight, any finitely generated right module embeds in a free module. And, as $_RR$ is also Kasch, $R$ is an S-ring in the sense of \cite{Kat}. The result now follows from
\cite[Theorem~1, $(3)\Rightarrow (1)$]{Kat}.
\end{proof}

\noindent One may note that, the assumption that $R_R$ is $R$-tight is critical in the proof of Theorem ~\ref{nil}. We do not know if this theorem is still valid under the weaker assumption that $R_R$ is generalized $R$-tight, and thus give a positive solution to Conjecture~\ref{conj1} when the Jacobson radical of $R/Z(R_R)$ is completely nil. However, our next theorem shows that the arguments can be adapted if we assume the slightly stronger assumption that the Jacobson radical of $R/Z(R_R)$ is completely right T-nilpotent.
Recall that an ideal $I$ of a ring $R$ is called right T-nilpotent if for any infinite sequence $r_1,\ldots,r_n,\ldots$
of elements in $I$, there exists an $n_0\geq 1$ such that
$r_{n_0}\cdot\ldots\cdot r_1=0$
(see \cite[p. 183, Definition before Proposition 2.5]{St}). And we are going to say that the Jacobson radical of a ring $S$ is {\em completely right {\rm T}-nilpotent} if every ring $S'$ which is a homomorphic image of $S$ has right T-nilpotent Jacobson radical.

\begin{theorem}\label{ess-socle} Let $R$ be a right Kasch ring such that any cyclic submodule of $E(R_R)$ embeds in a free module. If  the Jacobson radical of $R/Z(R_R)$ is completely right T-nilpotent, then $\Soc(R_R)$ is finitely generated and essential in $R_R$.
\end{theorem}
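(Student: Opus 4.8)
The plan is to mimic the three-step structure of the proof of Theorem \ref{nil}, replacing everywhere the nilpotency argument by a right T-nilpotency argument, and replacing the embeddings of cyclic submodules of $E(R_R)$ into $R$ by embeddings into free modules $R^{(n)}$ — this is why the cyclic modules must be handled with a bit more care. As before, set $E = E(R_R)$, $S = \End_R(E)$, $J = J(S)$, and let $\Phi: R \to S/J$ be the ring homomorphism of \cite[Lemma~1]{GT} with kernel $Z(R_R)$, inducing the injective ring map $\Psi: R/Z(R_R) \to S/J$. All the structural facts about idempotents $e_A$, their centrality modulo $J$ (and modulo the larger ideals $\mathcal N$, $\mathcal M$), and Tarski's almost-disjoint family $\mathcal K$ over the index set $I$ of homogeneous components carry over verbatim from \cite[Lemmas 3,4,7]{GT} and Step 1 of Theorem \ref{nil}, since none of those used $R$-tightness — only the existence of the maps $f_K$, $u_K$ and the idempotents.

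\textbf{Step 1 (finitely many homogeneous components).} Assuming $\Soc(R_R)$ has infinitely many homogeneous components, build $\mathcal K$, the central idempotents $e_K + \mathcal N$ in $S/\mathcal N$, and the elements $x_K = e_K\circ u(1) \in E$ as in Theorem \ref{nil}. The factorization $e_K \circ u = u_K \circ f_K$ through $x_K R$ still holds. Now, instead of a monomorphism $x_K R \to R$, the hypothesis only gives a monomorphism $\alpha_K : x_K R \to R^{(n_K)}$ for some finite $n_K$; extending $\alpha_K$ by injectivity to $s_K : E \to E^{n_K}$ and composing with a retraction $h_K : E^{n_K} \to E$ onto a copy containing $s_K(E(x_K R))$ as a monomorphic image, one still obtains (after recording the relevant coordinate) an element $r_K \in R$ with $h_K \circ s_K \circ e_K = e_K$ in the appropriate sense. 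The key point — that $r_K + \mathcal M \notin J(R/\mathcal M)$ — is now proved using right T-nilpotency in place of nilpotency: if $r_K + \mathcal M \in J(R/\mathcal M)$, applying complete right T-nilpotency of $J(R/Z(R_R))$ to the constant sequence $r_K, r_K, \ldots$ yields $r_K^{m} + \mathcal M = 0$ for some $m$ (right T-nilpotency of a constant sequence forces a power to vanish), and then the telescoping computation $0 = h_K^m \circ s_K^m \circ e_K + \mathcal N = \cdots = e_K + \mathcal N$ gives the same contradiction $e_K \in \mathcal N$. The rest of Step 1 — choosing maximal right ideals $L_K$, and showing $R/L_K \not\cong R/L_{K'}$ for $K \ne K'$ using centrality of $e_K, e_{K'}$ modulo $\mathcal N$ and injectivity of $\Psi$ — is unchanged, producing $|\mathcal K| > |I|$ non-isomorphic simple right modules, the desired contradiction.

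\textbf{Steps 2 and 3 (finite generation and essentiality of the socle).} These proceed exactly as in Theorem \ref{nil}: in Step 2 one replaces $\mathcal N$ by $\Soc(S/J)$ and the almost-disjoint family by singletons $A = \{i\}$, argues that a non-finitely-generated homogeneous component produces a simple module $D$ distinct from the finitely many $D_i$, again invoking complete right T-nilpotency (constant-sequence argument) to place $r_A$ outside the relevant Jacobson radical; in Step 3 one takes $1 - e_I$ in place of $e_A$ and $J$ in place of $\mathcal N'$, concluding that $\Soc(R_R)$ is essential. Here I must also use, as in the proof of Corollary \ref{EE}, the passage from an embedding into a free module to control of $P/(P\cdot Z(R_R))$ when verifying that the modules $L_k$ appearing are finitely generated; but this is the same bookkeeping already carried out there and in Theorem \ref{one}.

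The main obstacle is Step 1, specifically the adaptation of the nilpotency argument: with only right T-nilpotency available one cannot freely invoke powers of an \emph{arbitrary} element of $J(R/\mathcal M)$, so it is essential that the argument only ever needs a power of the \emph{single} element $r_K$ to vanish, which right T-nilpotency of the constant sequence $(r_K, r_K, \ldots)$ does deliver. One must also take care that passing from embeddings $x_K R \hookrightarrow R$ to embeddings $x_K R \hookrightarrow R^{(n_K)}$ does not disturb the identity $h_K \circ s_K \circ e_K = e_K$ modulo the relevant ideals; this is handled by composing with the projection onto a suitable coordinate, exactly as free-module embeddings are reduced to module embeddings in the proof of Corollary \ref{EE}. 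Once these two points are checked, the transfinite counting and the centrality-of-idempotents arguments of \cite{GT} go through without change.
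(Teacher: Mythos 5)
There is a genuine gap in Step 1 (and the same gap recurs in your Step 2), and it is precisely the point the paper's hypothesis is designed to handle. Your argument hinges on two incompatible claims: that after embedding $x_K R$ into a free module $R^{(n_K)}$ and ``recording the relevant coordinate'' you still have $h_K \circ s_K \circ e_K = e_K$, and that you can then run the telescoping computation $0 = h_K^m \circ s_K^m \circ e_K + \mathcal N = \cdots = e_K + \mathcal N$. If $s_K$ and $h_K$ denote the full maps $E\rightarrow E^{(n_K)}$ and $E^{(n_K)}\rightarrow E$, then indeed $h_K\circ s_K\circ e_K = e_K$ holds, but $s_K^m$ is not even defined since $s_K$ is not an endomorphism of $E$. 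If instead you project onto a single coordinate to get endomorphisms $s'_K = \pi_{t_0}\circ s_K$ and $h'_K = h_K\circ v_{t_0}$ of $E$, then $h'_K\circ s'_K\circ e_K$ is only \emph{one} term of the sum $\sum_t h_K\circ v_t\circ \pi_t\circ s_K\circ e_K = e_K$; all you can extract (by choosing $t_0$ appropriately) is that $h'_K\circ s'_K\circ e_K + J\notin \mathcal N/J$, not that it equals $e_K + \mathcal N$. The telescoping identity that drives the contradiction in Theorem~\ref{nil} is therefore lost, and the constant-sequence $(r_K, r_K,\ldots)$ argument collapses.

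This is not a fixable detail: if the constant-sequence version worked, it would show that ``completely nil'' already suffices, since right T-nilpotency applied to constant sequences is exactly ordinary nilpotency of each element. The authors explicitly state just before Theorem~\ref{ess-socle} that they do not know whether the result holds under the weaker ``completely nil'' hypothesis, and the paper's actual proof makes essential use of the full strength of complete right T-nilpotency: it constructs an iterative sequence of \emph{distinct} elements $r_{K,1}, r_{K,2},\ldots$ and maps $s'_{K,l}, h'_{K,l}$ satisfying $(h'_{K,l}\circ s'_{K,l}\circ\cdots\circ h'_{K,1}\circ s'_{K,1}\circ e_K)+J\notin\mathcal N/J$ for all $l$ together with $\Psi(r_{K,l+1}+\mathcal M)=s'_{K,l+1}\circ h'_{K,l}+\mathcal N$, and then applies right T-nilpotency to the \emph{non-constant} sequence $(r_{K,l})_l$ to produce the contradiction. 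Each new $r_{K,l}$ is generated by re-embedding a freshly constructed cyclic submodule $x_{K,l}R$ into a new free module and choosing a new coordinate, because at each stage the coordinate projection again destroys the retraction identity and forces one to iterate rather than take powers. Your outline skips exactly this iterative construction, which is the technical heart of the proof.
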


\begin{proof}

The general scheme of the proof is quite similar to that of Theorem \ref{nil}. Therefore, we will only elaborate on the part where the proof is different and otherwise we will just refer to the relevant part of the proof of Theorem \ref{nil}. We will follow the same notations as in Theorem \ref{nil}.

\medskip

{\bf Step 1.} We claim that $\Soc(R_R)$ contains only finitely many homogeneous components.


\noindent Let us assume on the contrary that ${\rm Soc}(R_R)$ has infinitely many homogeneous components and let  $\{C_i\}_{i\in I}$ be a representative set of the isomorphism classes of simple modules in ${\rm Soc}(R_R)$.
Proceeding exactly as in the proof of Theorem \ref{nil}, we get $\{e_K+\mathcal{N}\,|\,K\in\mathcal{K}\}$, an orthogonal family of nonzero central idempotents in $S/\mathcal{N}$.

Let $u:R\rightarrow E$ be the inclusion of $R_R$ in its injective envelope and call $x_{K,1}=e_{K}\circ u(1)\in E$. Then $e_{K}\circ u$ factors as

$$
\begin{xy}
  \xymatrix{
    R \ar[d]_u\ar[r]^{f_{K,1}} & x_{K,1}R\ar[d]^{u_{K,1}} \\
    E \ar[r]_{e_K} & E  \\
    }
\end{xy}
$$
where $f_{K,1}$ is an epimorphism and $u_{K,1}$, a monomorphism. As we are assuming that any cyclic  submodule of $E(R_R)$ embeds in a free module (of finite rank), there exists a monomorphism $\alpha_{K,1}:x_{K,1}R\rightarrow R^{(n_{K,1})}$, for some $n_{K,1}\geq 1$, that extends by injectivity to an $s_{K,1}:E\rightarrow E^{(n_{K,1})}$ such that $u^{(n_{K,1})}\circ\alpha_{K,1}=s_{K,1}\circ u_{K,1}$, where we are denoting by $u^{(n_{K,1})}:R^{(n_{K,1})}\rightarrow E^{(n_{K,1})}$ the inclusion. Again, as $s_{K,1}|_{E(x_{K,1}R)}:E(x_{K,1}R)\rightarrow E$ is injective, there exists an $h_{K,1}:E^{(n_{K,1})}\rightarrow E$ such that $h_{K,1}\circ s_{K,1}\circ e_{K}=e_{K}$

$$\begin{xy}
  \xymatrix{
   R \ar[d]_u\ar[r]^{f_{K,1}} & x_{K,1}R\ar[d]^{u_{K,1}}\ar [r]^{\alpha _{K,1}} &
   R^{(n_{K,1})}\ar[d]^{u^{(n_{K,1})}} \\
    E \ar[r]_{e_K} & E\ar[r]_{s_{K,1}}& E^{(n_{K,1})}  \ar[r]_{h_{K,1}} &E.\\
    }
\end{xy}
$$

Let $\pi_{K,1,t}:E^{(n_{K,1})}\rightarrow E$ and $v_{K,1,t}:E\rightarrow E^{(n_{K,1})}$, for $t=1,\ldots,n_{K,1}$ be the canonical projections and injections.  Then $\sum_{t=1}^{n_{K,1}}v_{K,1,t}\circ \pi_{K,1,t}=1_{E^{(n_{K,1})}}$. Therefore, we have that
$$\begin{array}{ll}
e_K &=h_{K,1}\circ s_{K,1}\circ e_K   \\
 & = h_{K,1}\circ \left(\sum_{t=1}^{n_{K,1}}v_{K,1,t}\circ \pi_{K,1,t}\right)\circ s_{K,1}\circ e_K  \\
& = \sum_{t=1}^{n_{K,1}}\left(h_{K,1,}\circ v_{K,1,t}\circ \pi_{K,1,t}\circ s_{K,1}\circ e_K\right)
\end{array}
$$

As $e_K+J\notin \mathcal{N}/J$, there exists a $t_0$ such that
$$\left(h_{K,1}\circ v_{K,1,{t_0}}\circ \pi_{K,1,{t_0}}\circ s_{K,1}\circ e_K\right)+J\notin \mathcal{N}/J.$$

Call $s'_{K,1}=\pi_{K,1,t_0}\circ s_ {K,1}$, $h'_{K,1}=h_{K,1}\circ v_{K,1,t_0}$ and $r_{K,1}= s'_{K,1}\circ e_K\circ u(1)\in R$.
And let $x_{K,2}=h'_{K,1}\circ u(r_{K,1})= h'_{K,1}\circ s'_{K,1}\circ e_K\circ u(1)\in E$. Again, as any cyclic submodule of $E(R_R)$ embeds in a free module, there exists a monomorphism $\alpha_{K,2}:x_{K,2}R\rightarrow R^{(n_{K,2})}$, for some $n_{K,2}\geq 1$, that extends by injectivity to an $s_{K,2}:E\rightarrow E^{(n_{K,2})}$ such that $u^{(n_{K,2})}\circ\alpha_{K,2}=s_{K,2}\circ u_{K,2}$. And, as $s_{K,2}|_{E(x_{K,2}R)}:E(x_{K,2}R)\rightarrow E^{(n_{K,2})}$ is injective, there exists an $h_{K,2}:E^{(n_{K,2})}\rightarrow E$ such that $h_{K,2}\circ s_{K,2}|_{E(x_{K,2}R)}=1|_{E(x_{K,2}R)}$. Moreover, as by construction,
$$h_{K,2}\circ s_{K,2}\circ h'_{K,1}\circ s'_{K,1}\circ e_K\circ u = h'_{K,1}\circ s'_{K,1}\circ e_K\circ u,$$
we get that $h_{K,2}\circ s_{K,2}\circ h'_{K,1}\circ s'_{K,1}\circ e_K - h'_{K,1}\circ s'_{K,1}\circ e_K\in J(S)$. Therefore, $h_{K,2}\circ s_{K,2}\circ h'_{K,1}\circ s'_{K,1}\circ e_K+J\notin \mathcal{N}/J$, as neither is $h'_{K,1}\circ s'_{K,1}\circ e_K + J\notin \mathcal{N}/J$.

Let $\pi_{K,2,t}:E^{(n_{K,2})}\rightarrow E$ and $v_{K,2,t}:E\rightarrow E^{(n_{K,2})}$, for $t=1,\ldots,n_{K,2}$ be the canonical projections and injections.  As
before, there exists a $t_0$ such that such that
$$\left(h_{K,2}\circ v_{K,2,{t_0}}\circ \pi_{K,2,{t_0}}\circ s_{K,2}\circ h'_{K,1}\circ s'_{K,1}\circ e_K\right)+J\notin \mathcal{N}/J.$$

Call now $s'_{K,2}=\pi_{K,2,t_0}\circ s_ {K,2}$, $h'_{K,2}=h_{K,2}\circ v_{K,2,t_0}$ and $r_{K,2}= s'_{K,2}\circ h_{K,1}\circ u(1)\in R$.

Repeating the same arguments, we can define $s'_{K,l},h'_{K,l}\in S$ and $r_{K,l}\in R$, for each $l\geq 1$ such that:
\begin{itemize}
\item $(h'_{K,l}\circ s'_{K,l}\circ\ldots\circ h'_{K,1}\circ s'_{K,1}\circ e_K)+J\notin \mathcal{N}/J$ for any $l\geq 1$.
\item $\Psi(r_{K,1}+\mathcal{M})=s'_{K,1}\circ e_K+\mathcal{N}$.
\item $\Psi(r_{K,l+1}+\mathcal{M})=s'_{K,l+1}\circ h'_{K,l}+\mathcal{N}$ for each $l\geq 1$.
\end{itemize}
We claim that there exists some $l\geq 1$ such that $r_{K,l}+\mathcal{M}\notin J(R/\mathcal{M})$. Assume on the contrary that $r_{K,l}+\mathcal{M}\in J(R/\mathcal{M})$ for each $l\geq 1$. As we are assuming that the Jacobson radical of $R/Z(R_R)$ is completely right T-nilpotent, there exists an $l_0$ such that $r_{K,l_0+1}\cdot\ldots\cdot r_{K,1}+\mathcal{M}=0$ in $R/\mathcal{M}$. But this means that
$$(s'_{K,l_0+1}\circ h'_{K,l_0}\circ\ldots\circ s'_{K,1}\circ e_{K})+\mathcal{N}=\Psi\left((r_{K,l_0+1}\cdot\ldots\cdot r_{K,1}+\mathcal{M})\right)=0$$
in $S/\mathcal{N}$, a contradiction. This proves our claim.

Call $r_K=r_{K,l_0}$, $s'_K=s'_{K,l_0+1}$ and $h'_K=h'_{K,l_0}$. As $e_K+J$ is central in $S/J$, we deduce that $s'_K\circ h'_K \circ e_K+J\notin S/\mathcal{N}$.
On the other hand, as $r_K+\mathcal{M}\notin J(R/\mathcal{M})$, there exists a maximal right ideal $L_K/\mathcal{M}$ of $R/\mathcal{M}$ such that $r_K+\mathcal{M}\notin L_K/\mathcal{M}$ and thus, $R/L_K$, is a simple right $R/\mathcal M$-module such that $R/L_K\cdot (r_K+\mathcal{M})\neq 0$.

Now as in the proof of Theorem \ref{nil}, we show that $e_K\circ e_{K'}\notin \mathcal{N}$. But this is a contradiction, since $e_K\circ e_{K'}\in \mathcal{N}$ by construction.

We have constructed then $|K|$ isomorphism classes of simple right $R$-modules. This yields a contradiction, since $|\mathcal{K}|\gvertneqq |I|$.

\medskip

{\bf Step 2.} We claim that any homogeneous component of ${\rm Soc}(R_R)$ is finitely generated.


\noindent Assume on the contrary that there exists a homogeneous component which is not finitely generated. We already know that there are only finitely many homogeneous components in ${\rm Soc}(R_R)$. Let $\{C_1,\ldots,C_m\}$ be a representative set of simple modules belonging to them.

Let us fix a simple module $C_i$, for $i=1,\ldots,m$, and call $C_{i,1}=C_i$. Let $e_{i,1}\in S$ be an idempotent such that $E(C_{i,1})=e_{i,1}E$ and let us repeat the reasonings made in Step~1, in order to decompose $e_{i,1}\circ u=u_{i,1}\circ f_{i,1}$, where $f_{i,1}:R\rightarrow x_{i,1}R$ is an epimorphism and $u_{i,1}:x_{i,1}R\rightarrow E$, a monomorphism. As each cyclic submodule of $E(R_R)$ embeds in a free module, there exists a monomorphism $\alpha:x_{i,1}R\rightarrow R^n$, for some $n\geq 1$. So there exists a projection $\pi:R^n\rightarrow R$ such that $\pi\circ \alpha|_{C_{i,1}}\neq 0$.
And, as the simple module $C_{i,1}$ is essential in $x_{i,1}R$, we deduce that $\pi\circ\alpha$ is a monomorphism. Call it $\alpha_{i,1}$. This monomorphism extends by injectivity to an endomorphism $s_{i,1}:E\rightarrow E$ such that $s_{i,1}\circ u_{i,1}=u\circ \alpha_{i,1}$. Note that $s_{i,1}\circ e_{i,1}\notin J(S)$ since $s_{i,1}\circ e_{i,1}|_{C_{i,1}}$ is a monomorphism and thus, $s_{i,1}\circ e_{i,1}$ does not have essential kernel.

Let us call $C_{i,2}=s_{i,1}\circ e_{i,1}(C_{i,1})$. Then $C_{i,2}$ is a simple right $R$-module isomorphic to $C_{i,1}$ and therefore, we can repeat the above construction to obtain $e_{i,2},s_{i,2}\in S$ such that $(s_{i,2}\circ e_{i,2})|_{C_{i,2}}$ is injective and therefore, $(s_{i,2}\circ e_{i,2}\circ s_{i,1}\circ e_{i,1})|_{C_{i,1}}$ is also injective. So $s_{i,2}\circ e_{i,2}\circ s_{i,1}\circ e_{i,1}\notin J(S)$. Repeating this construction, we can find by recurrence idempotents $e_{i,n}\in S$ and elements $s_{i,n}\in S$ such that
$(s_{i,n}\circ e_{i,n}\circ\ldots\circ s_{i,1}\circ e_{i,1})|_{C_i,1}$ is a monomorphism, and therefore, $s_{i,n}e_{i,n}\circ\ldots\circ s_{i,1}\circ e_{i,1}\notin J(S)$.

Let $r_{i,n}=s_{i,n}\circ e_{i,n}\circ u(1)\in R$ for each $n\geq 1$. We claim that $r_{i,n}+Z(R_R)\notin J(R/Z(R_R))$ for some $n\geq 1$.
Assume on the contrary that $r_{i,n}+Z(R_R)\in J(R/Z(R_R))$ for every $n\geq 1$.
As we are assuming that $J(R/Z(R_R))$ is completely right T-nilpotent, there exists an $n_0$ such that $r_{i,n_0}\cdot\ldots\cdot r_{i,1}+Z(R_R)=0$ in $R/Z(R_R)$. Thus, $\Phi(r_{i,n_0}\ldots r_{i,1}+Z(R_R))=0$ in $S/J$. But, as by construction, $\Phi(r_{i,n}+Z(R_R))=s_{i,n}\circ e_{i,n}+J$, we deduce that $(s_{i,n_0}\circ e_{i,n_0}\circ\ldots\circ s_{i,1}\circ e_{i,1}) \in J(S)$, a contradiction. This proves our claim.

Let us choose an $n$ such that $r_{i,n}+Z(R_R)\notin J(R/Z(R_R))$ and set $r_i=r_{i,n},\,e_i=e_{i,n},\, s_i=s_{i,n}$. Replacing, if necessary $C_i$ by its isomorphic image $C_{i,n}$, we get that $E(C_i)=e_iE$. Moreover, as $r_i+Z(R_R)\notin J(R/Z(R_R))$, there exists a maximal right ideal $L_i/Z(R_R)$ of $R/Z(R_R)$ such that $r_i+Z(R_R)\notin L_i/Z(R_R)$. Call $D_i=R/L_i$. Then $D_i$ is a simple $R/Z(R_R)$-module such that $D_i\cdot (r_i+Z(R_R))\neq 0$.

We claim that $D_i\ncong D_j$ if $i\neq j$. Assume on the contrary that $\delta:D_i\rightarrow D_j$ is an isomorphism. As $D_i\cdot(r_i+Z(R_R))\neq 0$, we can choose an $0\neq x\in D_i$ such that  $x\cdot(r_i+Z(R_R))\neq 0$ and thus, $\delta(x)\cdot (r_i+Z(R_R))$ is a generator of $D_j$. Again, as $D_j\cdot (r_j+Z(R_R))\neq 0$, there exists an $r\in R$ such that $\delta(x)(r_irr_j+Z(R_R))\neq 0$. In particular, $r_irr_j\notin Z(R_R)$. And thus,
$$(s_i\circ e_i+J)\circ \Phi(r+Z(R_R))\circ (s_j\circ e_j+J)=\Phi(r_irr_j+ Z(R_R))\neq 0.$$
Let
$$g=(s_i+J)\circ (e_i+J)\circ \Phi(r+Z(R_R))\circ (s_j+J):S/J\longrightarrow S/J.$$ Then, as ${\rm Im}((e_i+J)\circ \Phi(r+Z(R_R))\circ (s_j+J))$ is contained in the simple right ideal $(e_iS+J)/J$, we deduce that ${\rm Im}(g)$ is contained in a simple right ideal of $S/J$, say $Y$, isomorphic to $(e_iS+J)/J$. But then, as
$(s_i\circ e_i+J)\circ \Phi(r+Z(R_R))\circ (s_j\circ e_j+J)\neq 0$, we deduce that the homomorphism $(s_j+J)\circ(e_j+J)\circ \Phi(r+Z(R_R))\circ (s_i+J)$ is not zero either, when restricted to the simple right ideal $(e_jS+J)/J$. And this means that $(e_jS+J)/J\cong Y\cong (e_iS+J)/J$. Therefore, $e_iS\cong e_jS$, since they are the projective covers of $(e_iS+J)/J$ and $(e_jS+J)/J$, respectively. But this means that $E(C_i)\cong E(C_j)$ and so, $C_i\cong C_j$, a contradiction. This proves our claim.

Lets now assume that some homogeneous component is not finitely generated. Say that it is the homogeneous component associated to $C_1$. Then as in the proof of Theorem \ref{nil}, we arrive at a contradiction. This shows that each homogeneous component of ${Soc}(R_R)$ is finitely generated and thus, ${Soc}(R_R)$ is finitely generated.

\medskip

{\bf Step 3.} We finally claim that $\Soc (R_R)$ is essential in $R_R$.

\noindent The proof of this part is identical to the step 3 of Theorem \ref{nil} and this completes the proof.

\end{proof}

We can now state the following partial answer to the CF and FGF conjectures.

\begin{corollary}\label{main} Let $R$ be a right CF ring. If the Jacobson radical of $R/Z(R_R)$ is completely right T-nilpotent, then $R$ is right artinian.
In particular, if R is right FGF, then it is QF.
\end{corollary}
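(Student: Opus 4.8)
The plan is to derive Corollary~\ref{main} directly from Theorem~\ref{ess-socle} together with the known structure theory connecting finitely generated essential socle with the CF and FGF conditions. First I would observe that a right CF ring is trivially right Kasch: every simple right module is cyclic, hence embeds in a free module, and a simple submodule of $R^{(n)}$ lies in finitely many coordinates and, since it is simple, actually embeds in a single copy of $R_R$. Next, any right CF ring has the property that every cyclic submodule of $E(R_R)$ embeds in a free module; indeed if $xR\subseteq E(R_R)$ is cyclic, it is in particular a cyclic right $R$-module, so by the CF hypothesis it embeds in $R^{(n)}$ for some $n$. Thus the hypotheses of Theorem~\ref{ess-socle} are met once we add that $J(R/Z(R_R))$ is completely right T-nilpotent, which is exactly our assumption. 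Therefore $\Soc(R_R)$ is finitely generated and essential in $R_R$.

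Having secured a finitely generated essential socle, the second step is to upgrade this to right artinian. Here I would invoke the machinery already cited in the paper: by Corollary~\ref{EE} and the arguments of \cite{EE}, a right CF ring with finitely generated essential right socle is right artinian. More precisely, the standard route is to show that under these hypotheses every cyclic right $R$-module again has finitely generated essential socle, since a cyclic module $C$ embeds in $R^{(n)}$, hence in the semiperfect-type structure forced by $\Soc(R_R)$ being finitely generated and essential; one then deduces that $R$ has finite uniform dimension and is right Kasch with essential finitely generated socle, and applies the Bj\"{o}rk--Tolskaya / \cite{EE} result that such a right CF ring is right artinian. I would cite \cite[Theorem~3.1]{EE} and the discussion around Corollary~\ref{EE} for this deduction rather than reproving it.

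For the ``in particular'' clause, suppose $R$ is right FGF. Then $R$ is right CF, so by the first two steps $R$ is right artinian. Moreover every finitely generated right $R$-module embeds in a free module. A right artinian ring in which every finitely generated (equivalently, every cyclic, since over an artinian ring finitely generated modules are finite length and one reduces to cyclics) right module embeds in a free module is well known to be QF; this is the easy direction of the FGF problem observed by Faith in \cite{F1}. Concretely, $R_R$ embeds in a free module via the identity, and the artinian hypothesis plus Corollary~\ref{eight} (any right artinian right tight ring is QF) applies once one notes that right FGF implies right tight. Hence $R$ is QF.

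The main obstacle is the passage from ``finitely generated essential right socle'' to ``right artinian'' for a general right CF ring: this is precisely the content of the deep results in \cite{EE} building on Osofsky's counting arguments, and it is not something one would want to reprove here. So the real work is entirely in verifying that Theorem~\ref{ess-socle} applies and then carefully quoting the right CF $\Rightarrow$ right artinian implication from the literature; the FGF-to-QF step is then routine. I would take care, when citing \cite{EE}, to confirm that their hypotheses are stated for right CF rings with the socle condition rather than for the stronger ``essentially embeds in a projective'' hypothesis, and if necessary insert the short argument that each cyclic right module inherits a finitely generated essential socle before appealing to their theorem.
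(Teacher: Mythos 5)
You correctly identify Theorem~\ref{ess-socle} as the engine, and your verification of its hypotheses is sound: every simple right module is cyclic, CF embeds it in $R^{(n)}$, and projecting a simple submodule of $R^{(n)}$ to a coordinate that does not kill it gives an embedding into $R_R$, so right CF does imply right Kasch; and a cyclic submodule of $E(R_R)$ is in particular a cyclic $R$-module, so CF gives the generalized $R$-tight hypothesis. From the f.g.\ essential socle, however, your route to right artinian leans on the wrong references: \cite[Theorem~3.1]{EE} and Corollary~\ref{EE} concern \emph{essential} embeddings into projectives, which right CF does not provide. The paper's actual step is the one you mention only in passing: once $\Soc(R_R)$ is f.g.\ and essential, $\Soc(R^{(n)})$ is f.g.\ and essential in $R^{(n)}$ for each $n$, so any cyclic $R/I\hookrightarrow R^{(n)}$ inherits a f.g.\ essential socle, i.e.\ every cyclic is finitely cogenerated; by \cite[Theorem~10.4, Proposition~10.10]{AF} this is exactly the condition characterizing right artinian rings. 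You should quote that, not \cite{EE}.

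The genuine gap is in the FGF clause. You assert that ``right FGF implies right tight'' and then feed this into Corollary~\ref{eight}. That implication is false: right FGF says every finitely generated right module embeds in a \emph{free} module, so a finitely generated submodule of $E(R_R)$ embeds in some $R^{(n)}$ --- this is precisely the \emph{generalized} right tight condition defined in the paper, not right tightness, which demands an embedding into $R$ itself. Consequently Corollary~\ref{eight} does not apply as written. The conclusion ``right artinian $+$ right FGF $\Rightarrow$ QF'' is of course true and classical --- the paper itself invokes it without comment, and the standard reference is Faith's survey \cite{F1} (or \cite{NY}) --- but it does not come for free from the tight machinery; if you want to route through Proposition~\ref{seven}/Corollary~\ref{eight}, you would first have to \emph{prove} that a right artinian right FGF ring is right tight, which requires an actual argument (e.g.\ analyzing how a finitely generated submodule of $E(R_R)$ with essential socle sits inside $R^{(n)}$), not a one-line observation.
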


\begin{proof}
By Theorem~\ref{ess-socle}, we know that $R$ has a finitely generated essential right socle and thus, any cyclic right $R$-module also has a finitely generated essential socle, since it embeds in a (finitely generated) free module. Therefore, $R$ is right artinian (see e.g. \cite[Theorem 10.4]{AF} and \cite[Proposition 10.10]{AF}).
\end{proof}

Our next corollary shows that the main result of \cite{GT} is a consequence of our Corollary~\ref{main}.

\begin{corollary}
Let $R$ be a ring and assume that $R/Z(R_R)$ is a von Neumann regular ring. Then:
\begin{enumerate}
\item If $R$ is right CF, then it is right artinian.

\item If $R$ is right FGF, then it is QF.

\end{enumerate}
\end{corollary}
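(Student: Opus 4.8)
The plan is to reduce the statement directly to Corollary~\ref{main}, so the only real task is to check that the hypothesis appearing there, namely that the Jacobson radical of $R/Z(R_R)$ is completely right T-nilpotent, is automatic once we know that $R/Z(R_R)$ is von Neumann regular. First I would recall the elementary fact that von Neumann regularity passes to quotient rings: if $T$ is von Neumann regular and $N$ is a two-sided ideal of $T$, then for $\bar a \in T/N$ we lift to $a\in T$, choose $x\in T$ with $axa=a$, and observe $\bar a\,\bar x\,\bar a=\bar a$ in $T/N$, so $T/N$ is again von Neumann regular. Consequently every ring $S'$ that is a homomorphic image of $R/Z(R_R)$ is von Neumann regular.

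Next I would invoke the standard fact that a von Neumann regular ring has zero Jacobson radical. Hence every such $S'$ satisfies $J(S')=0$, and the zero ideal is trivially right T-nilpotent (for any infinite sequence $r_1,r_2,\ldots$ of elements of the zero ideal, already $r_1=0$, so the defining condition holds with $n_0=1$). By the definition of completely right T-nilpotent Jacobson radical recalled just before Theorem~\ref{ess-socle}, this says precisely that the Jacobson radical of $R/Z(R_R)$ is completely right T-nilpotent.

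With this observation in place, part (1) is immediate from Corollary~\ref{main}: a right CF ring $R$ whose ring $R/Z(R_R)$ has completely right T-nilpotent Jacobson radical is right artinian. Part (2) follows in the same way from the ``in particular'' clause of Corollary~\ref{main}, since every right FGF ring is right CF. I do not expect any genuine obstacle here; all the substance is carried by Theorem~\ref{ess-socle} and Corollary~\ref{main}, and this corollary merely records that passing to von Neumann regular quotients $R/Z(R_R)$ trivializes the T-nilpotency hypothesis, thereby recovering the main result of \cite{GT} as a special case.
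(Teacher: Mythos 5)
Your argument is correct and follows the same route as the paper: observe that von Neumann regularity passes to quotients, so every homomorphic image of $R/Z(R_R)$ has zero (hence trivially right T-nilpotent) Jacobson radical, and then invoke Corollary~\ref{main}. The paper states this more tersely but makes exactly the same reduction.
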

\begin{proof}
If $R/Z(R_R)$ is von Neumann regular, then every ring which is a homomorphic image of $R/Z(R_R)$ has zero Jacobson radical. We may now apply Corollary \ref{main}.
\end{proof}

We close the paper by extending \cite[Theorem 2]{SR}.

\begin{corollary}
Let $R$ be a right CF ring such that $R/J(R)$ is von Neumann regular and $J(R)$ is right T-nilpotent. Then $R$ is right artinian. In particular, if $R$ is right FGF, then it is QF.
\end{corollary}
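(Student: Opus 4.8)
The plan is to reduce everything to Corollary~\ref{main}. Given that corollary, it suffices to verify that the Jacobson radical of $R/Z(R_R)$ is completely right T-nilpotent, i.e.\ that every homomorphic image of $R/Z(R_R)$ has right T-nilpotent Jacobson radical. Since every homomorphic image of $R/Z(R_R)$ is in particular a homomorphic image of $R$, I would instead prove the formally stronger statement that $J(R/K)$ is right T-nilpotent for \emph{every} two-sided ideal $K$ of $R$; this automatically covers the quotients of $R/Z(R_R)$, and feeding it into Corollary~\ref{main} yields that $R$ is right artinian, while the last sentence of that corollary gives the FGF part.

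So I would fix a two-sided ideal $K$ of $R$ and write $\bar J$ for the image of $J(R)$ in $R/K$. The first step is to identify $J(R/K)=\bar J$. The inclusion $\bar J\subseteq J(R/K)$ holds because $\bar J$ is a quasi-regular ideal: every element of $\bar J$ is the image of some $j\in J(R)$, and since $1-rj$ is a unit of $R$ for all $r\in R$, its image is a unit of $R/K$. For the reverse inclusion, note that $(R/K)/\bar J\cong R/(J(R)+K)$ is a homomorphic image of the von Neumann regular ring $R/J(R)$, hence is itself von Neumann regular, hence semiprimitive; therefore $J(R/K)\subseteq\bar J$.

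The second step is that $\bar J$ is right T-nilpotent, which is just the fact that right T-nilpotence passes to homomorphic images: given a sequence $\bar r_1,\bar r_2,\dots$ in $\bar J$, lift it to a sequence $r_1,r_2,\dots$ in $J(R)$; right T-nilpotence of $J(R)$ produces an $n_0$ with $r_{n_0}\cdots r_1=0$ in $R$, whence $\bar r_{n_0}\cdots\bar r_1=0$ in $R/K$. This completes the verification. There is no genuine obstacle here: the argument only uses that homomorphic images of von Neumann regular rings are von Neumann regular, that von Neumann regular rings are semiprimitive, and that right T-nilpotence descends to quotients; the single point requiring a little care is the opening reduction from quotients of $R/Z(R_R)$ to quotients of $R$, which is legitimate precisely because the latter class contains the former.
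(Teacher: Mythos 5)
Your proof is correct, and its core mechanism is the same as the paper's: both consider the canonical surjection onto $R/(J(R)+M)$, observe that this ring is a quotient of the von Neumann regular ring $R/J(R)$ and hence semiprimitive, deduce that every element of $J(R/M)$ is the image of an element of $J(R)$, and then transfer right T-nilpotence down the quotient. Where you diverge is at the outset. The paper first proves $Z(R_R)\subseteq J(R)$ by a separate argument: right T-nilpotence of $J(R)$ lets idempotents lift modulo $J(R)$, so $R$ is semiregular, and then it invokes Nicholson's structure theorem for semiregular modules to conclude that singular elements are superfluous. Only afterwards does the paper treat two-sided ideals $M$ containing $Z(R_R)$. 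You instead prove the formally stronger statement that $J(R/K)$ is right T-nilpotent for \emph{every} two-sided ideal $K$ of $R$, which trivially subsumes the quotients of $R/Z(R_R)$ and renders the inclusion $Z(R_R)\subseteq J(R)$ irrelevant. This is a genuine simplification: the semiregularity lemma and the appeal to Nicholson are bypassed entirely, replaced by the elementary observations that the image of $J(R)$ in $R/K$ is a quasi-regular ideal and that right T-nilpotence passes to homomorphic images. (In fact the identification $J(R/K)=\bar{J}$ you prove is slightly more than needed --- only the inclusion $J(R/K)\subseteq\bar{J}$ is used --- but that costs nothing.) Both proofs feed the conclusion into Corollary~\ref{main} in the same way, including for the FGF statement.
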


\begin{proof}
As $J(R)$ is right T-nilpotent, idempotents lift modulo $J(R)$ (see e.g. \cite[Proposition 4.2]{St}). So the ring is semiregular.
We claim that $Z(R_R)\subseteq J(R)$. Choose any $r\in Z(R_R)$ and call $f:R\rightarrow R$ the homomorphism given by left multiplication by $r$.
As $rR$ is finitely generated, there exists an idempotent $e\in R$ such that $eR\subseteq xR$ and $xR\cap (1-e)R$ is superfluous in $R$ (see \cite[Theorem 1.6]{N}). But, as ${\rm Ker}(f)$ is essential in $R$, $e=0$, and this means that $xR=xR\cap (1-e)R$ is superfluous in $R$. So $r\in J(R)$.

Now, let $M/Z(R_R)$ be any two-sided ideal of $R/Z(R_R)$. As $Z(R_R)\subseteq J(R)$, we have a surjective homomorphism of rings $\varphi: R/M \rightarrow R/(M+J)$. And then, $\varphi(J(R/M))\subseteq J(R/(J+M))$ (see \cite[Corollary 15.8]{AF}). Therefore, any element $r+M\in J(R/M)$ is of the form $j+M$ with $j\in J(R)$. Thus, $J(R/M)$ is right T-nilpotent. The result now follows from Corollary~\ref{main}.
\end{proof}

We close the paper by extending two results of \cite{GT}.

\begin{corollary}(see \cite[Theorem 16]{GT})\label{CF-characterization}
Let $R$ be a right CF ring. Then the following conditions are equivalent:
\begin{enumerate}
\item $J(R/Z(R_R))$ is completely right T-nilpotent.
\item Every cyclic right $R$-module essentially embeds in a projective module.
\item $R_R$ is continuous.
\end{enumerate}
Moreover, in this case $Z(R_R)=J(R)$ and $R$ is right artinian.
\end{corollary}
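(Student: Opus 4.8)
The plan is to prove the cycle of equivalences $(1)\Rightarrow(2)\Rightarrow(3)\Rightarrow(1)$, together with the final "moreover" clause, by stitching together the machinery already developed in the paper. First I would establish $(1)\Rightarrow(2)$. Since $R$ is right CF, every cyclic submodule of $E(R_R)$ embeds in a free module; assuming $(1)$, Theorem~\ref{ess-socle} gives that $\Soc(R_R)$ is finitely generated and essential in $R_R$. More importantly, the argument of Theorem~\ref{ess-socle} (via Theorem~\ref{nil}, through the ring homomorphism $\Psi\colon R/Z(R_R)\to S/J$) forces $S=\End_R(E(R_R))$ to be semiperfect and $E(R_R)$ to be finite-dimensional. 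Then each cyclic submodule $xR$ of $E(R_R)$ sits inside a finite direct sum of indecomposable injectives, and one invokes \cite[Theorem 3.1]{EE} (as in the proof of Corollary~\ref{three}) to conclude that $xR$ essentially embeds in a projective module. This is exactly the setup used already in Corollary~\ref{main} and Corollary~\ref{three}, so $(1)\Rightarrow(2)$ should go through smoothly once the finite-dimensionality of $E(R_R)$ is in hand.

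Next, $(2)\Rightarrow(3)$ is immediate from the standard characterization of right continuous rings: a ring is right continuous iff $R_R$ is a CS module satisfying the $C_2$ condition, and the hypothesis that every cyclic (hence every right-ideal, using that $R$ is CF so right ideals embed in free modules) submodule of $E(R_R)$ essentially embeds in a projective summand of $R_R$ yields both that $R_R$ is extending and that $R_R$ has $C_2$. Concretely, if $I$ is a right ideal isomorphic to a summand $eR$, then $eR$ essentially embeds in a projective, which (being a summand of a finitely generated free) forces $I$ to be a summand; and extending follows since any right ideal is essential in $E'\cap R$ for the appropriate summand $E'$ of $E(R_R)$. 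For $(3)\Rightarrow(1)$, I would use that a right continuous ring satisfies $J(R)=Z(R_R)$ and $R/J(R)$ is von Neumann regular (this is the classical Utumi–Osofsky result on continuous rings, available from \cite{NY}). Hence every homomorphic image of $R/Z(R_R)$ is a homomorphic image of the von Neumann regular ring $R/J(R)$, so has zero Jacobson radical, which is trivially completely right T-nilpotent.

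For the final clause, once $(3)$ holds we already noted $Z(R_R)=J(R)$; then $R$ is a right CF ring with $J(R)=Z(R_R)$ and $R/J(R)$ von Neumann regular (so $J(R/Z(R_R))=0$ is completely right T-nilpotent), and Corollary~\ref{main} applies to give that $R$ is right artinian. I expect the main obstacle to be the $(2)\Rightarrow(3)$ step: one must be careful that "every cyclic submodule of $E(R_R)$ essentially embeds in a projective module" genuinely upgrades to the $C_1$ and $C_2$ conditions on $R_R$ itself, which requires combining the CF hypothesis (right ideals embed in free modules, so are finitely generated projective once shown to be summands) with the essential-embedding-in-a-projective property; the projective module into which $xR$ embeds must be shown to be a direct summand of $R_R$, using that $E(R_R)$ is finitely generated after the finite-dimensionality argument, and then transferring the splitting back. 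Everything else is a bookkeeping assembly of results already proved in Sections 2 and 3.
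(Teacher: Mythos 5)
The paper's own proof is essentially a two-line citation: $(1)\Rightarrow(2)$ from Corollary~\ref{main} together with \cite[Corollary 13]{GT}, and the remaining implications (as well as the ``moreover'' clause) from \cite[Theorem 16]{GT}. Your proposal instead tries to re-derive those implications from scratch, and while your $(3)\Rightarrow(1)$ (via Utumi's structure theorem for right continuous rings, $J(R)=Z(R_R)$ and $R/J(R)$ von Neumann regular) and the final artinian clause via Corollary~\ref{main} are both fine, the two forward implications have genuine gaps.

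For $(1)\Rightarrow(2)$, obtaining ``$R$ right artinian'' (equivalently, finitely generated essential socle plus the CF hypothesis) does not by itself give that each cyclic submodule of $E(R_R)$ essentially embeds in a \emph{projective} module: one needs to know that the injective envelope $E(C)$ of each relevant simple $C$ is projective, and that is not automatic for a right artinian right CF ring. Your appeal to \cite[Theorem~3.1]{EE} ``as in Corollary~\ref{three}'' is misplaced, because in Corollary~\ref{three} the CS (extending) hypothesis is already present, and that is precisely what is used there to produce the essential embedding before \cite[Theorem~3.1]{EE} is applied. The paper outsources exactly this upgrade to \cite[Corollary~13]{GT}, which is doing real work you have not replicated.

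For $(2)\Rightarrow(3)$, you yourself flag it as the main obstacle, and indeed the sketch does not close: knowing that cyclic submodules of $E(R_R)$ essentially embed in \emph{some} projective module does not directly give $C_1$ (that projective is not a priori a direct summand of $R_R$ nor even finitely generated), and the $C_2$ argument you sketch is circular — for $I\cong eR$ you observe $eR$ is projective, which says nothing about whether $I$ splits off. Getting from ``essential embedding in projectives'' to continuity here requires the structural results proved in \cite[Theorem~16]{GT}; an elementary re-derivation would need to first show the ambient projective modules can be taken to be direct summands of $R_R$, which is a non-trivial extra step you have not supplied. Unless you import those two results from \cite{GT} as the paper does, the two implications $(1)\Rightarrow(2)$ and $(2)\Rightarrow(3)$ are not established by the argument you give.
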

\begin{proof}
$(1)\Rightarrow (2)$ is a consequence of Theorem~\ref{main} and \cite[Corollary 13]{GT}. The other implications follow from \cite[Theorem 16]{GT}.
\end{proof}

\begin{corollary}
Let $R$ be a ring. Then the following conditions are equivalent:
\begin{enumerate}
\item Every cyclic right $R$-module embeds in $R$ and $J(R/Z(R_R))$ is completely  right nil.

\item Every cyclic right $R$-module essentially embeds in a direct summand of $R$.

\item $R$ is a direct sum of rings which are either right uniserial or finite matrix rings over two-sided uniserial rings.

\end{enumerate}
\end{corollary}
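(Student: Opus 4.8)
The plan is to prove the cycle $(2)\Rightarrow(1)\Rightarrow(3)\Rightarrow(2)$, isolating the structural input to the step $(1)\Rightarrow(3)$. The implication $(2)\Rightarrow(1)$ is essentially a reformulation of the results of Section 2: if every cyclic right $R$-module essentially embeds in a direct summand of $R$, then in particular it essentially embeds in a projective module, so Corollary~\ref{EE} gives that $R$ is right artinian; then for every two-sided ideal $N$ the quotient $R/N$ is again right artinian, so $J(R/N)$ is nilpotent and in particular every element of it is nilpotent, whence $J(R/Z(R_R))$ is completely (right) nil; and a direct summand of $R$ is contained in $R$, so every cyclic right $R$-module embeds in $R$. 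This gives $(1)$.

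For $(1)\Rightarrow(3)$ I would first observe that $(1)$ forces $R$ to be right Kasch (each simple right module is cyclic, hence embeds in $R$) and right $R$-tight (each cyclic submodule of $E(R_R)$ is a cyclic module, hence embeds in $R$). Running the right-Kasch forms of the arguments of Section 2 — Theorem~\ref{nil}/Theorem~\ref{ess-socle} together with the nilpotency hypothesis on $J(R/Z(R_R))$, as in Corollary~\ref{main} — yields that $\Soc(R_R)$ is finitely generated and essential and that $R$ is right artinian; note that once $R$ is right artinian the two nilpotency notions coincide and hold automatically on all homomorphic images. Next, every right ideal $I$ of $R$ has the form $I=\{r\in R\mid ar=0\}$, where $a$ is the image of $1$ under an embedding $R/I\hookrightarrow R_R$; thus every right ideal of $R$ is a right annihilator and $r_R(l_R(I))=I$ for every right ideal $I$. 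Decomposing the right artinian ring $R$ into its finitely many indecomposable ring direct summands $R=R_1\times\cdots\times R_t$, each $R_i$ again satisfies $(1)$: a cyclic $R_i$-module is a cyclic $R$-module, and any embedding of it into $R_R$ has image annihilated by the complementary central idempotents, hence landing in the summand $R_i$. One is thus reduced to the indecomposable case, where the structure theory of (right artinian) rings with the cyclic-embedding property — in the spirit of the classification of artinian principal right ideal rings à la K\"othe--Asano--Nakayama--Kawada — identifies $R_i$ as either a right uniserial ring or a full matrix ring $M_n(S_i)$ over a two-sided uniserial ring $S_i$. This is $(3)$.

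For $(3)\Rightarrow(2)$ I would work one block at a time: a cyclic module over a finite product of rings is the product of cyclic modules over the factors, and an essential embedding into a direct summand can be assembled componentwise, so it suffices to treat a single ring $B$ appearing in $(3)$. If $B$ is right uniserial of finite length it is indecomposable, so its only module direct summands are $0$ and $B$, and it is quasi-Frobenius in the finite-length case; hence for a proper right ideal $I$ one has $B/I\cong l_B(I)$, a nonzero right ideal containing $\Soc(B_B)$ and therefore essential in $B$, giving an essential embedding into the direct summand $B$ of $B_B$. If $B=M_n(S)$ with $S$ two-sided uniserial of finite length, Morita equivalence carries a cyclic right $B$-module to an $n$-generated right $S$-module, hence (since $S$ is serial) to a direct sum of at most $n$ cyclic $S$-modules; each of these essentially embeds into $S_S$ by the uniserial case, so their direct sum essentially embeds into a direct summand of $S^n$, and transporting back along the equivalence yields the required essential embedding into a direct summand of $B$.

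The main obstacle is the step $(1)\Rightarrow(3)$: passing from "right artinian with every cyclic embeddable in $R$" to the explicit matrix-over-uniserial decomposition is exactly the delicate point, and I would lean on the classical structure theory of artinian principal ideal rings for it, being careful to extract the one-sided refinement (allowing genuinely right uniserial, not necessarily two-sided uniserial, blocks) and to confirm that such blocks are right self-injective so that the argument of $(3)\Rightarrow(2)$ applies. A secondary technical point is that the deduction of right artinianness from $(1)$ must be run through the right-Kasch versions of Theorems~\ref{nil} and \ref{ess-socle} rather than through Theorem~\ref{nil} itself, since $R$ is not assumed a priori to be a cogenerator.
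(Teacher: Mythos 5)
Your proof takes a genuinely different route from the paper's, which is almost entirely by citation: the paper deduces $(1)\Rightarrow(2)$ from Theorem~\ref{nil} combined with the proof of \cite[Theorem~17]{GT}, and lifts the remaining implications directly from \cite[Theorem~17]{GT}. Your proposal attempts a self-contained cycle $(2)\Rightarrow(1)\Rightarrow(3)\Rightarrow(2)$. The architecture is reasonable and $(2)\Rightarrow(1)$ is fine, but there are genuine gaps in the other two legs.

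In $(1)\Rightarrow(3)$ you must show that $R$ is right artinian, and you propose to get the finitely generated essential socle by running ``right-Kasch forms of Theorem~\ref{nil} and Theorem~\ref{ess-socle}.'' No such form with your hypothesis is available. Theorem~\ref{nil} genuinely needs $R_R$ to be a \emph{cogenerator}, not merely right Kasch: in Steps~2 and~3 of its proof one writes $E(C_i)=r_iR=e_iE$, i.e.\ one needs each indecomposable injective $E(C_i)$ to be a direct summand of $R_R$, which is exactly the cogenerator property and does not follow from ``every cyclic embeds in $R$'' (an injective hull of a simple need not be cyclic). Theorem~\ref{ess-socle} is the Kasch version, but it requires $J(R/Z(R_R))$ to be completely right T\nobreakdash-nilpotent, which is strictly stronger than the ``completely nil'' assumption in (1); so it does not apply either. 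You label this the ``secondary technical point,'' but it is in fact the crux: the paper offloads exactly this step to the proof of \cite[Theorem~17]{GT}, and a blind proof has to supply a replacement, which yours does not.

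In $(3)\Rightarrow(2)$ you assert that a right uniserial ring of finite length is quasi-Frobenius, and use $B/I\cong l_B(I)$ to get an essential embedding. This is false for merely \emph{right} uniserial rings: there exist local right artinian rings whose unique maximal right ideal $J$ satisfies $J^2=0$ with $J$ simple on the right but not on the left (built from a nontrivial finite field extension); these are right uniserial of length $2$ but not right self-injective, so the annihilator argument collapses. Consequently the right-uniserial blocks in (3) are not handled, and the Morita/serial reduction for the matrix blocks relies on the same self-injectivity claim for the base ring. You also implicitly assume the uniserial blocks in (3) are of finite length; as stated (3) does not say so, and without it even ${\bf Z}$ is a counterexample to $(3)\Rightarrow(2)$, so this constraint (coming from \cite[Theorem~17]{GT}) needs to be made explicit. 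In short, the skeleton of your argument matches the intended picture, but both the artinianness step and the structural classification step currently lean on assertions that are either unavailable in the paper or outright false as stated.
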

\begin{proof}
$(1)\Rightarrow (2)$ is a consequence of Theorem~\ref{nil} and the proof of \cite[Theorem 17]{GT}. The other implications follow from  \cite[Theorem 17]{GT}.
\end{proof}

\bigskip

\noindent {\bf Acknowledgment.}

\medskip

\noindent Part of this paper was written during a visit of the first author to the Gebze Institute of Technology in Turkey during August 2014 and a visit of the second author to the Department of Mathematics of the University of Murcia in October and November 2014. They would like to thank both departments for their hospitality during their visits.


\bigskip

\bigskip


\end{document}